\DeclareMathSymbol{\twoheadrightarrow}  {\mathrel}{AMSa}{"10}
\def\HH{{\mathcal H}}
\def\C{{\mathbb C}}
                          \def\AA{{\mathbb A}}
\def\P{{\mathbb P}}
\def\f{{\tilde F}}
                     \def\f0{{\mathfrak f}}
\def\A8{{\mathbf A}_8}
\def\Bir{\mathrm{Bir}}
                       \def\proj{\mathrm{proj}}
\def\Aut{\mathrm{Aut}}
\def\II{{{\mathrm I}}}
              \def\L{{\mathcal L}}
\def\fchar{\mathrm{char}}
\def\GL{\mathrm{GL}}
\def\A{\mathcal{A}}
\def\C{\mathcal{C}}
\def\T{{\mathcal T}}
     \def\t{\mathrm{t}}
\def\B{{\mathcal B}}
                         \def\TT{\mathrm{T}}
\def\dim{\mathrm{dim}}
       \def\PGL{\mathrm{PGL}}
       \def\G{{\mathfrak G}}
          \def\l1{{\mathbf 1}}
                                                     \def\HH{{\mathcal H}}
                         \def\AA{\mathbb{A}}
\newtheorem{thm}{Theorem}[section]
\newtheorem{lem}[thm]{Lemma}
\newtheorem{cor}[thm]{Corollary}
\newtheorem{prop}[thm]{Proposition}
\theoremstyle{definition}
\newtheorem{defn}[thm]{Definition}
\newtheorem{rem}[thm]{Remark}
\newtheorem{rems}[thm]{Remarks}
        \newtheorem{sect}[thm]{}
\begin{document}

\title[Jordan groups]{Jordan groups and elliptic ruled surfaces}

\author{Yuri G. Zarhin}
\thanks{This work was partially supported by a grant from the Simons Foundation (\# 246625 to Yuri Zarkhin)}
\address{Department of Mathematics, Pennsylvania State University, University Park, PA 16802, USA}
\address{Department of Mathematics, The Weizmann Institute of Science, POB 26,  Rehovot 7610001, Israel}
\email{zarhin@math.psu.edu}

\dedicatory{In memoriam of Emmanuil El'evich Shnol}

\begin{abstract}
We prove that an analogue of Jordan's theorem on finite subgroups of general linear groups  holds
 for the groups of biregular automorphisms of elliptic ruled surfaces. This gives a positive answer to a question of Vladimir L. Popov.
\end{abstract}

\subjclass[2010]{14E07, 14K05}

\maketitle

\section{Introduction}

We write $k$ for an algebraically closed field, $\AA^1$, $\AA^2$ and $\P^1$ for the affine line, affine plane and projective line respectively (all over $k$).
If $U$ is an irreducible algebraic variety over $k$ then $k[U]$, $k(U)$, $\Aut(U)$ and $\Bir(U)$ stand for its ring ($k$-algebra) of regular functions, its field of rational functions, its group of biregular automorphisms and the group of birational $k$-automorphisms respectively. If $E$ is a vector bundle of finite rank over $U$ then we write  $\Aut(E)$ for the group of (biregular) automorphisms of $E$ (that leave invariant every fiber and act linearly on it).  If $z$ is a $k$-point on $U$ then we write $E_z$ for the fiber of $E$ over $z$; if $u \in \Aut(E)$ then we write $u_z$ for the linear automorphism of the $k$-vector space $E_z$ induced by $u$. As usual, we say that a rank $2$ vector bundle $E$ over $U$ is decomposable (respectively, indecomposable) if it is isomorphic (respectively, not isomorphic) to a direct sum of two line bundles over $U$.

If $Y$ is an abelian variety over $k$ and $z$ is a $k$-point of $Y$ then we call the {\sl  translation} by $z$ the biregular automorphism $\TT_z$ of $Y$ defined by the formula
 $$\TT_z:y \mapsto y+z.$$
There is the natural embedding
$$Y(k) \hookrightarrow \Aut(Y), \ z \mapsto \TT_z. \eqno{(0)}$$
Further we identify $Y(k)$ with its image in $\Aut(Y)$. It is well known that if $\dim(Y)=1$ then $Y(k)$ is a subgroup of finite index in $\Aut(Y)$.

We write $\II_U$ for the {\sl trivial} line bundle $U\times \AA^1$ over $U$ and $\II^2_U$ for the {\sl trivial} rank $2$ vector bundle
$$\II_U^2=\II_U \oplus \II_U=(U\times \AA^1) \times_U (U\times \AA^1)=U\times \AA^2$$
over $U$. By an elliptic curve we mean an irreducible smooth projective curve of genus 1. If $A$ is a finite set (group) then we write $\#(A)$ for its number of elements (order).

The following definition was inspired by the classical theorem of Jordan \cite[Sect. 36]{CR} about finite subgroups of general linear groups (over fields of characteristic zero).

\begin{defn}[Definition 2.1 of \cite{Popov}]
A group $B$ is called a {\sl Jordan group} if there exists a positive integer $J_B$ such that every finite subgroup $B_1$ of $B$ contains a
normal commutative subgroup, whose index in $B_1$ is at most $J_B$.
\end{defn}

\begin{rem}
Clearly,  a subgroup of a Jordan group is also Jordan. If a Jordan group  $G_1$ is a subgroup of {\sl finite} index in a group $G$ then $G$ is also Jordan.
 Every commutative group is Jordan and all finite groups are Jordan.  A product of two Jordan groups is also Jordan.
See \cite[Sect. 1]{Popov2} also for plenty of examples  of Jordan (and non-Jordan) groups. See also \cite{Tu,Popov3,ProSh}.
\end{rem}

 V. L. Popov (\cite[Sect. 2]{Popov}, see also \cite{Popov2}) posed a question whether $\Aut(S)$ is a Jordan group when $S$ is an irreducible algebraic surface over $k$ with $\fchar(k)=0$.
 He obtained a positive answer to his question for almost all
 surfaces. (The case of rational surfaces was treated earlier by J.-P. Serre \cite[Sect. 5.4]{Serre}.) The only remaining case is  when $S$ is birationally (but not biregularly) isomorphic to a product $X\times\P^1$ of an elliptic curve $X$ and the projective line.

In what follows we always assume that $\fchar(k)=0$.
 Our main result is the following statement, which gives a positive  answer to Popov's question.

 \begin{thm}
 \label{elliptic}
 If $X$ is an elliptic curve over $k$  and $S$ is an irreducible normal projective algebraic surface that is birationally isomorphic to $X\times\P^1$ then
 $\Aut(S)$ is a Jordan group.
 \end{thm}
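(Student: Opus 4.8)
The plan is to study $\Aut(S)$ via the ruling of $S$ over the elliptic curve $X$, separating the action on the base from the fiberwise action and then controlling the resulting extension. First I would reduce to a geometrically ruled model: since $S$ is birationally isomorphic to $X\times\P^1$, the Albanese variety of a smooth projective model of $S$ is $X$, and the Albanese map gives an $\Aut(S)$-equivariant dominant rational map from $S$ onto $X$ whose generic fiber is a smooth rational curve. After an equivariant resolution and the relative minimal model program over $X$ this becomes a $\P^1$-bundle $\P(E)\to X$ attached to a rank $2$ vector bundle $E$ on $X$; since the Jordan property concerns only finite subgroups, it suffices to bound them uniformly after transporting to such models. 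Projection to the base gives a homomorphism $\rho\colon\Aut(S)\to\Aut(X)$, and its target is Jordan because $X(k)$ is commutative, hence Jordan, and has finite index in $\Aut(X)$, so the Remark applies.

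Next I would identify the kernel $K=\ker\rho$ of automorphisms inducing the identity on $X$; these preserve every fiber and are precisely the fiberwise automorphisms of $\P(E)$, namely $\bigl(\End_{\mathcal O_X}(E)\bigr)^{\times}/k^{\times}$, where $\End_{\mathcal O_X}(E)=H^0\!\bigl(X,\mathcal{E}nd(E)\bigr)$ is a finite-dimensional $k$-algebra because $X$ is projective. Thus $K$ is the group of $k$-points of a linear algebraic group over $k$, and by the classical theorem of Jordan applied to an embedding into some $\GL_n(k)$ the group $K$ is Jordan; restriction to the generic fiber embeds its finite subgroups into $\PGL_2$ over an algebraic closure of $k(X)$, so they have order at most $60$, uniformly in $E$. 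Atiyah's classification makes $K$ explicit: it equals $\PGL_2$ only for trivial $E$, it is torsion-free (unipotent $\mathbb{G}_a$, or trivial) for every indecomposable $E$, and it is a semidirect product of $\mathbb{G}_m$ and $\mathbb{G}_a^{d}$, or just $\mathbb{G}_m$, for split $E$ according to whether the two summands have different or equal degrees.

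The main obstacle is that $\Aut(S)$ is an extension of the Jordan group $\rho(\Aut(S))$ by the Jordan group $K$, and such an extension need not be Jordan --- extraspecial $p$-groups are bounded-by-abelian yet admit no abelian subgroup of bounded index --- so the geometry must be used. I would split into cases. If $K$ is torsion-free (indecomposable $E$), every finite subgroup of $\Aut(S)$ meets $K$ trivially, hence injects into $\Aut(X)$ under $\rho$ and is controlled by the Jordan constant of $\Aut(X)$. If $E$ is split with summands of different degree, then $\P(\TT_x^{*}E)\cong\P(E)$ forces $\TT_x^{*}N\cong N$ for the nonzero-degree line bundle $N$ measuring the two summands, and the set of such $x$ is finite; hence $\rho(\Aut(S))$ is finite, $K$ has finite index in $\Aut(S)$, and the Remark gives the conclusion.

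The delicate case, and the step I expect to be hardest, is split $E$ with summands of equal degree, which I normalize as $E=\mathcal O\oplus N$ with $\deg N=0$ and $N$ nontrivial (the trivial bundle gives the product $\Aut(X\times\P^1)=\Aut(X)\times\PGL_2(k)$ of Jordan groups and is immediate). Here all translations lift, $K=\mathbb{G}_m$, and the liftable translations together with $K$ form a central extension of a subgroup of $X(k)$ by $\mathbb{G}_m$, namely the theta group of $N$; the crux is to compute its commutator pairing. Since $\deg N=0$, the homomorphism $x\mapsto \TT_x^{*}N\otimes N^{-1}$ into $\Pic^0(X)$ is identically trivial, so the pairing vanishes and this theta group is abelian. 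Consequently $\Aut(S)$ contains an abelian subgroup of index bounded by a universal constant (coming from the finite part of $\Aut(X)$ and the finitely many distinguished sections of $\P(E)$), so it is virtually abelian and Jordan. As all bounds are uniform in $E$, every equivariant model is controlled by a single universal Jordan constant for $\Aut(S)$. This vanishing of the commutator pairing is exactly what rules out the Heisenberg-type behavior that the naive extension would otherwise permit.
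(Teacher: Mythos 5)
Your overall skeleton is the same as the paper's (reduce to a $\P^1$-bundle $\P(E)\to X$, run a case analysis on $E$ via Maruyama/Atiyah, and use the commutativity of the theta group when $E=\II_X\oplus L$ with $\deg L=0$), and your equal-degree case is essentially the paper's Lemma \ref{comm}. But two of your steps fail, one of them fatally. Your identification $K=\bigl(\End_{\OC_X}(E)\bigr)^{\times}/k^{\times}$ of the fiberwise group is wrong: an automorphism of $\P(E)$ over $X$ is induced by an isomorphism $E\to E\otimes M$ for \emph{any} line bundle $M$ with $E\otimes M\cong E$, not only $M=\OC_X$, and in Maruyama's exact sequence (2) the quotient $\varDelta$ records exactly these extra automorphisms. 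This kills your indecomposable case: for the indecomposable bundle $E$ of odd degree (the case $N(E)=1>0$) one has $\End(E)=k$, so your $K$ would be trivial, whereas in fact $E\otimes M\cong E$ for every $M\in\Pic^0(X)[2]$ (both sides are indecomposable of rank $2$ with the same determinant, and Atiyah's classification says such bundles are determined by their determinant), so $\Aut_X(S)\cong\varDelta\cong(\Z/2\Z)^2$, which is finite but not torsion-free (Sect.\ \ref{maruyama}(i)). Moreover every translation $\TT_z$ lifts to $\Aut(S)$ here, since $\TT_z^{*}E\cong E\otimes M$ whenever $M^{2}\cong\TT_z^{*}\det E\otimes(\det E)^{-1}$, which is solvable because $\Pic^0(X)$ is divisible. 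So in this case $\Aut(S)$ is an extension of a group containing all of $X(k)$ by a finite nontrivial group: precisely the bounded-by-abelian situation that you yourself point out (extraspecial groups) is not automatically Jordan, and finite subgroups do not inject into $\Aut(X)$. This is the case for which the paper develops its group-theoretic Lemma \ref{abext} (the commutator pairing of a central extension, combined with the fact that a finite subgroup $H\subset Y(\kappa)$ satisfies $[H:nH]\le n^{2g}$); your proposal has no substitute for that argument. (The same oversight occurs, harmlessly, in your equal-degree case: when $L$ is $2$-torsion, $K\cong\mathbb{G}_m\rtimes(\Z/2\Z)$ rather than $\mathbb{G}_m$.)

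Separately, your reduction transports each finite subgroup $G\subset\Aut(S)$ to a relatively minimal model $\P(E_G)$ that depends on $G$, so you need a Jordan bound uniform over all models that can occur; your closing claim that ``all bounds are uniform in $E$'' is false. For $E=\II_X\oplus L$ with $\deg L=d\neq0$, sending $u$ to the projectivization of $\mathrm{id}_{\II_X}\oplus u$ embeds the theta group $\G(L)$ into $\Aut(\P(E))$, and since Mumford's commutator pairing $e^L$ on $H(L)\cong(\Z/d\Z)^2$ is nondegenerate, $\Aut(\P(E))$ contains a finite Heisenberg group of order $d^{3}$ in which every abelian subgroup has index at least $d$. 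Hence the Jordan constants of $\Aut(\P(E))$ are unbounded as $E$ varies, and indeed your own bound in the unequal-degree case (finite index of $K$, with index controlled by $H(N)$) visibly depends on $E$. The paper sidesteps this entirely by never changing the model: if $\pi$ has a degenerate fiber, then $f(\Aut(S))$ permutes the finitely many images of such fibers, hence is finite, while $\Aut_X(S)\hookrightarrow\PGL(2,k(X))$ is Jordan by Jordan's theorem, so $\Aut(S)$ is Jordan already (Remark \ref{prelimiv}); the bundle-theoretic analysis is then applied only to the single fixed $E$. Your route could be repaired by showing that $|N(E_G)|$ is bounded over the models dominated by the fixed $S$, but that argument is absent. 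A minor error in the same spirit: finite subgroups of $\PGL_2$ over an algebraically closed field of characteristic $0$ do not have order at most $60$ (cyclic and dihedral subgroups are unbounded); $60$ bounds only the index of a normal abelian subgroup.
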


\begin{rem}
 It is known \cite{ZarhinEdinburgh}
 that $\Bir(X\times\P^1)$  is {\sl not} a Jordan group
 \end{rem}

 \begin{rem}
 Suppose that $S$ is a {\sl non}-smooth normal surface. Since it is normal, there are only finitely many  singular points on $S$. Then, by \cite[Sect. 2, Cor. 8]{Popov2}, $\Aut(S)$ is Jordan.
 This implies that in the course of the proof of Theorem \ref{elliptic} we may assume that $S$ is smooth.
 \end{rem}

 \begin{cor}
 \label{surfaceJ}
 Suppose that $V$ is an irreducible normal projective algebraic variety over $k$. If $\dim(V) \le 2$ then $\Aut(V)$ is Jordan.
 \end{cor}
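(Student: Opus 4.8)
The plan is to split by the dimension of $V$, disposing of everything of dimension $\le 1$ by elementary facts and feeding the two-dimensional case into Theorem \ref{elliptic} together with the previously known cases of Popov's question. Since $\Aut(V)$ is trivial when $V$ is a point, and trivial and finite groups are Jordan, the only substantive cases are $\dim(V)=1$ and $\dim(V)=2$.

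Suppose first that $\dim(V)=1$. A normal curve is automatically smooth, so $V$ is a smooth irreducible projective curve of some genus $g$, and I would treat the three genus ranges separately. If $g\ge 2$, then $\Aut(V)$ is finite, hence Jordan. If $g=0$, then $V\cong\P^1$ and $\Aut(V)=\PGL_2(k)$, which is Jordan by the classical theorem of Jordan that motivated the definition in this section. If $g=1$, then $V$ is an elliptic curve $X$; as recalled in the introduction, the subgroup $X(k)$ of translations has finite index in $\Aut(X)$, and $X(k)$ is commutative and therefore Jordan, so by the remark above a group containing a Jordan subgroup of finite index is itself Jordan, whence $\Aut(V)$ is Jordan.

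Now suppose $\dim(V)=2$, so $V=S$ is an irreducible normal projective surface. Here I would invoke the dichotomy described in the introduction: by the work of Serre \cite{Serre} for rational surfaces and of Popov \cite{Popov,Popov2} for the remaining classes, $\Aut(S)$ is already known to be Jordan for every such surface except when $S$ is birationally isomorphic to a product $X\times\P^1$ of an elliptic curve and the projective line. But precisely that outstanding case is the content of Theorem \ref{elliptic}, which asserts that $\Aut(S)$ is Jordan for any irreducible normal projective surface birational to $X\times\P^1$. Combining the two statements exhausts all normal projective surfaces, and the corollary follows.

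Thus the corollary is essentially a bookkeeping assembly: the genuinely new input is Theorem \ref{elliptic}, while the low-dimensional cases and the other surface classes are either elementary or already in the literature. I expect no serious obstacle here beyond checking that the case division is exhaustive — in particular that normality in dimension one forces smoothness (so no singular-curve subtlety arises) and that the singular normal surface case is subsumed, as noted in the remark following Theorem \ref{elliptic}, by \cite[Sect. 2, Cor. 8]{Popov2}.
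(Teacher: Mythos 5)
Your proof is correct, but it takes a more roundabout route than the paper's. The paper's own proof has no dimension casework at all: it observes that $\Aut(V)\subset\Bir(V)$ and invokes \cite[Th.~2.32]{Popov}, which states that $\Bir(V)$ is Jordan for every irreducible variety of dimension at most $2$ that is not birationally isomorphic to a product of an elliptic curve and $\P^1$; since a subgroup of a Jordan group is Jordan, this disposes at once of points, curves, and all surfaces outside the exceptional birational class --- singular or not, because $\Bir$ depends only on the birational type. The exceptional class is then handled by Theorem~\ref{elliptic}, exactly as in your proof. Your genus-by-genus treatment of curves and your appeal to the Serre--Popov dichotomy for $\Aut$ of surfaces are both sound (and the curve case is pleasantly self-contained), but note that the clean citable statement in \cite{Popov} is about $\Bir$, not $\Aut$; to justify the $\Aut$-level dichotomy you quote from the introduction, one passes through the inclusion $\Aut(V)\subset\Bir(V)$ anyway, which is precisely the paper's shortcut. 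Likewise, your residual concern about singular normal surfaces (which you cover via \cite[Sect.~2, Cor.~8]{Popov2}) evaporates in the paper's approach, since birational invariance of $\Bir$ makes smoothness irrelevant.
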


 \begin{proof}[Proof of Corollary \ref{surfaceJ}]
 We have $\Aut(V) \subset \Bir(V)$. If $V$ is {\sl not} birationally isomorphic to a product of the projective line and an elliptic curve then
 $\Bir(V)$ is Jordan (\cite[Th. 2.32]{Popov}) and therefore its subgroup $\Aut(V)$ is also Jordan. If $V$ is birationally isomorphic to a product of the projective line and an elliptic curve then $\dim(V)=2$ and Theorem \ref{elliptic}  implies that $\Aut(V)$ is Jordan.
 \end{proof}

\begin{thm}
\label{NonNormal}
Let $V$ be an irreducible projective algebraic variety over $k$. If $\dim(V) \le 2$ then $\Aut(V)$ is Jordan.
\end{thm}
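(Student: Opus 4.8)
The plan is to reduce the non-normal case to the already-established normal case (Corollary \ref{surfaceJ}) via normalization. Given an irreducible projective variety $V$ with $\dim(V) \le 2$, I would consider its normalization $\nu: \tilde V \to V$, where $\tilde V$ is an irreducible normal projective variety equipped with a finite birational morphism $\nu$ to $V$. The key classical fact is that normalization is functorial with respect to birational automorphisms: since $\nu$ is the normalization and $V$ is irreducible, every automorphism of $V$ lifts uniquely to an automorphism of $\tilde V$ commuting with $\nu$. This yields a group homomorphism $\Aut(V) \to \Aut(\tilde V)$.

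Next I would argue that this homomorphism is \emph{injective}. Because $\nu$ is a birational morphism that is an isomorphism over a dense open subset $U \subseteq V$, any automorphism of $V$ is determined by its restriction to $U$, and likewise the lifted automorphism on $\tilde V$ restricts to the same map on the preimage of $U$; hence an automorphism of $V$ whose lift is the identity must itself be the identity. Therefore $\Aut(V)$ embeds as a subgroup of $\Aut(\tilde V)$. Since $\dim(\tilde V) = \dim(V) \le 2$ and $\tilde V$ is irreducible, normal, and projective, Corollary \ref{surfaceJ} tells us that $\Aut(\tilde V)$ is Jordan. By the Remark following the definition of Jordan group, every subgroup of a Jordan group is again Jordan, so $\Aut(V)$ is Jordan.

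The main point to verify carefully is the functoriality of normalization for automorphisms: one needs that $\Aut(V)$ genuinely acts on $\tilde V$ by biregular (not merely birational) automorphisms. This follows from the universal property of normalization—an automorphism $g \in \Aut(V)$ gives a finite birational morphism $\tilde V \xrightarrow{\nu} V \xrightarrow{g} V$, and since $\tilde V$ is normal the composite factors through a unique morphism $\tilde g: \tilde V \to \tilde V$ over $g$, which is an isomorphism because $g^{-1}$ induces its inverse. I expect this step to be the only genuine subtlety; the injectivity and the appeal to Corollary \ref{surfaceJ} are then immediate. An alternative phrasing avoiding normalization would handle the dimension-one case (smooth projective curves, where $\Aut$ is finite for genus $\ge 2$, an extension of an abelian variety by a finite group for genus $1$, and $\PGL_2$ for genus $0$, all manifestly Jordan) separately and invoke normalization only in dimension two, but the uniform argument above is cleaner.
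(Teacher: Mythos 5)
Your proposal is correct and follows essentially the same route as the paper: pass to the normalization $\nu: V^{\nu} \to V$, use the unique lifting of automorphisms to get an embedding $\Aut(V) \hookrightarrow \Aut(V^{\nu})$, and apply Corollary \ref{surfaceJ} together with the fact that a subgroup of a Jordan group is Jordan. Your additional verification of functoriality and injectivity of the lifting is exactly what the paper delegates to standard references.
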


\begin{proof}[Proof of Theorem \ref{NonNormal}]
Let $\nu:V^{\nu} \to V$ be the {\sl normalization} of $V$ (\cite[Ch. II, Sect. 5]{ShAG},  \cite[Ch. III, Sect. 8]{MumfordRed}).  Here $\nu$ is a birational regular map and
 $V^{\nu}$ is an irreducible  {\sl normal projective} variety of the same dimension (as $V$) over $k$ \cite[Th. 4 on p. 203]{MumfordRed}.
It is well known  that every biregular automorphism of $V$ lifts uniquely to a biregular automorphism of $V^{\nu}$ \cite[Ch. 2,  Sect. 2.14, Th. 2.25 on p, 141]{Itaka}. This
give rise to the {\sl embedding} of groups
$$\Aut(V)\hookrightarrow \Aut(V^{\nu}).$$
By  Corollary \ref{surfaceJ}, the group $\Aut(V^{\nu})$ is Jordan. Since $\Aut(V)$ is isomorphic to a subgroup of  Jordan   $\Aut(V^{\nu})$, it is also Jordan.
\end{proof}

\begin{cor}
\label{FinalProj}
Let $V$ be a projective algebraic variety over $k$ and $\Aut(V)$ the group of biregular automorphisms of $V$.
 If $\dim(V)\le 2$ then $\Aut(V)$ is Jordan.
\end{cor}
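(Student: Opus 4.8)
The plan is to deduce the (possibly reducible) case from Theorem \ref{NonNormal}, which already handles irreducible projective varieties of dimension at most $2$. Write $V=V_1\cup\dots\cup V_m$ for the decomposition of $V$ into its irreducible components, each endowed with its reduced structure, so that every $V_i$ is an irreducible projective variety with $\dim(V_i)\le\dim(V)\le 2$. Any biregular automorphism of $V$ permutes the finite set of components, which yields a homomorphism
$$\rho:\Aut(V)\to \Perm(\{V_1,\dots,V_m\})$$
to the symmetric group on the components. Its kernel $G_0$ consists of the automorphisms that carry each $V_i$ onto itself, and $G_0$ has finite index in $\Aut(V)$ (the index divides $m!$). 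Since a group containing a Jordan subgroup of finite index is itself Jordan (one of the stability properties of Jordan groups recalled in the Remark after the definition), it suffices to prove that $G_0$ is Jordan.

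Next I would pass from $G_0$ to the components by restriction. Each $g\in G_0$ maps $V_i$ biregularly onto itself, and so does $g^{-1}$, hence $g$ restricts to an element $g|_{V_i}\in\Aut(V_i)$; this defines a homomorphism
$$r:G_0\to\prod_{i=1}^m\Aut(V_i),\qquad g\mapsto (g|_{V_1},\dots,g|_{V_m}).$$
The claim is that $r$ is injective. Indeed, if every $g|_{V_i}$ is the identity, then $g$ fixes every point of $V=\bigcup_i V_i$; moreover, for any open $U\subseteq V$ and any section $s\in\mathcal{O}_V(U)$ the difference $g^*(s)-s$ restricts to $0$ on each reduced component $V_i$, and therefore vanishes because $V$ is reduced and the $V_i$ cover $V$. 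Thus $g^*=\IT$ on $\mathcal{O}_V$, and combined with $g=\IT$ on points this forces $g=\IT$.

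Finally, Theorem \ref{NonNormal} shows that each $\Aut(V_i)$ is Jordan; since a finite product of Jordan groups is Jordan and a subgroup of a Jordan group is Jordan (again by the Remark), the group $G_0$, embedded via $r$ into $\prod_{i=1}^m\Aut(V_i)$, is Jordan. Together with the finite-index reduction of the first paragraph, this gives that $\Aut(V)$ is Jordan. The only genuinely delicate point is the injectivity of $r$: one needs to know that an automorphism fixing each component is the identity, which relies on $V$ being reduced so that a regular function is determined by its restrictions to the components. The remaining arguments are purely formal, using only the stability of the Jordan property under finite products, passage to subgroups, and finite-index overgroups.
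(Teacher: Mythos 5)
Your proof is correct and takes essentially the same route as the paper: decompose $V$ into its irreducible components, apply Theorem \ref{NonNormal} to each $\Aut(V_i)$, and glue via the finite-index stabilizer of the components together with the restriction embedding into $\prod_i \Aut(V_i)$. The only difference is that the paper outsources exactly this gluing step to Lemma 1 in Section 2.2 of \cite{Popov2}, whereas you prove it directly — and your argument, including the injectivity of the restriction map via reducedness of $V$, is sound.
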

\begin{proof}
Let $V_1, \dots , V_r$ be all the {\sl irreducible} components of $V$. Clearly, all $V_i$ are irreducible projective varieties with
$\dim(V_i) \le \dim(V) \le 2$. By Theorem  \ref{NonNormal}, all $\Aut(V_i)$ are Jordan. Now Lemma 1 in Section 2.2 of \cite{Popov2} implies that $\Aut(V)$ is also Jordan.
\end{proof}

\begin{rem}
Using Corollary \ref{FinalProj}, T.M. Bandman and the author recently have proven that the group $\Aut(V)$ is Jordan for all algebraic surfaces $V$ \cite{BaZar}.
\end{rem}

The paper is organized as follows. Section \ref{uniruled} contains preliminary remarks about smooth projective uniruled surfaces that are fibered over an elliptic curve. In Section \ref{Bir1} we discuss interrelations between the automorphisms of elliptic ruled surfaces and rank 2 vector bundles over elliptic curves. Our exposition is based on beautiful results of M. Maruyama \cite{Mar1,Mar2}.
In Section \ref{theta} we discuss Mumford's theta groups \cite[Sect. 23]{Mumford}  viewed as automorphism groups of certain {\sl decomposable} elliptic ruled surfaces. In Section \ref{final} we prove the main result. Section \ref{group} contains auxiliary results from group theory that were used in Section \ref{Bir1}.

{\bf Acknowledgements}. I am deeply grateful to Volodya Popov for a stimulating question and useful discussions,  and to Tatiana Bandman for useful discussions. My special thanks go to the referees, whose comments helped to improve the exposition.

This work was done during the academic year 2013/2014 when I was Erna and Jakob Michael Visiting Professor in the Department of Mathematics  at the Weizmann Institute of Science, whose hospitality and support are gratefully acknowledged.

\section{Uniruled surfaces}
\label{uniruled}
Let $X$ be an elliptic curve over $k$ and let $S$ be an irreducible smooth projective surface over $k$  that is birationally isomorphic to $X\times\P^1$.  Clearly, there is plenty of rational sections $X \to S$ of $\pi$. Since $X$ is a smooth curve and $S$ is projective, each such a section extends to the regular map $X \to S$, which is a section of $\pi$.

\begin{rems}
 \label{prelim}
 \begin{itemize}
 \item[(i)]
 Let us fix a birational isomorphism between {\sl smooth} projective $S$ and $X\times\P^1$. The projection map $X\times\P^1\to X$ gives rise to a rational map $\pi: S \to X$ with dense image. Since $S$ is smooth and $X$ becomes an abelian variety (after the choice of a base point), it follows from a theorem of Weil \cite[Sect. 4.4]{Neron} that $\pi$ is regular. Since $S$ is projective,  $\pi: S \to X$ is surjective, because its image is closed.  Clearly, $X$ may be identified (after the choice of a base point) with the Albanese variety of $S$ and $\pi$ is the universal Albanese map for $S$ \cite{ShS}. In particular, every biregular automorphism $\sigma$ of $S$ induces (by functoriality) a certain biregular automorphism $f(\sigma)$ of $X$. This gives rise (see \cite[Lemma 6 on p. 97]{Mar2})) to an exact sequence
 $$\{1\} \to \Aut_X(S) \subset \Aut(S) \stackrel{f}{\to} \Aut(X) \eqno{(1)}$$
 where the subgroup
 $\Aut_X(S)$ consists of all biregular automorphisms $\sigma \in \Aut(S)$ such that $\pi\sigma=\pi$
 (i.e. $\sigma$ leaves invariant every fiber of $\pi$)
 and the group homomorphism
 $$f:  \Aut(S) \to \Aut(X)$$
 is characterized by the property
 $$f(\sigma)(\pi(s))=\pi(\sigma(s)) \ \forall \sigma \in \Aut(S), s \in S.$$
 (In other words, $\Aut_X(S)$ consists of all biregular automorphisms of $S$ that induces the identity map on the base $X$.)
 For each $z \in X(k)$ the automorphism $\sigma \in \Aut_X(S)$ induces the biregular automorphism of the fiber $\pi^{-1}(z)$, which we denote by $\sigma_z$.

 Clearly, the generic fiber of $\pi$ is the projective line $\P^1_{k(X)}$ over the field $k(X)$ of rational functions of $X$. This gives us an embedding
 $$\Aut_X(S) \hookrightarrow \Aut(\P^1_{k(X)})=\PGL(2,k(X)).$$
 The group  $\PGL(2, k(X))$ is a {\sl linear group} that is isomorphic (via the {\sl adjoint} representation) to a subgroup of $\GL(3 ,k(X))$. It follows  from the theorem of Jordan that $\PGL(2, k(X))$ is Jordan.
 This implies that its subgroup $\Aut_X(S)$ is also Jordan. It follows that if the image
 $$f( \Aut(S))\subset \Aut(X)$$
 is finite then $\Aut(S)$ is also Jordan.

 So, in the course of the proof of Theorem \ref{elliptic} we may assume that $f(\Aut(S))$ is infinite.

 \item[(ii)]
 Let $x_0 \in X(k)$ be a $k$-point of $X$. Then one may define on $X$ the structure of a one-dimensional abelian variety, by taking $x_0$ as the zero of the group law.
 The subgroup
 $$\Aut_{x_0}(X)=\{u \in \Aut(X)\mid u(x_0)=x_0\}$$
 is the automorphism group of the one-dimensional abelian variety $X$ and therefore is finite. On the other hand, if $\T \subset X(k)$ is a {\sl nonempty} finite set of $k$-points on $X$ then the (sub)group
 $$\Aut(X,\T) =\{u \in \Aut(X)\mid u(\T)=\T\} \subset \Aut(X)$$
 contains a subgroup of finite index that lies in $\Aut_{x_0}(X)$ for every $x_0 \in \T$. This implies that $\Aut(X,\T)$ is finite.
 \end{itemize}
 \end{rems}

\begin{rem}
\label{prelimiv}
 Let $S_0$ be the (finite) set of all {\sl degenerate} fibers of $\pi$, i.e.,  the fibers $\pi^{-1}(x)$  (with $x\in X(k)$)  that are {\sl not} isomorphic (as a closed subscheme) to $\P^1$.
 Suppose that $S_0$ is {\sl not} empty and therefore its image $\T:=\pi(S_0)$ is a finite nonempty subset of $X(k)$. Clearly, $\Aut(S)$ permutes elements of $S_0$ and therefore $f(\Aut(S))$ permutes the elements of $\T$. This implies that
 $$f(\Aut(S))\subset \Aut(X,\T).$$
 Since $\T$ is nonempty and finite, the group $\Aut(X,\T)$ is finite. This implies that $\Aut_X(S)$ is a subgroup of finite index in $\Aut(S)$. Since $\Aut_X(S)$ is Jordan (Remark \ref{prelim}(i)), $\Aut(S)$ is also Jordan.

  It follows that in the course of the proof of Theorem \ref{elliptic} we may assume that all the fibers of $\pi$ are biregularly isomorphic to $\P^1$, i.e., $\pi: S \to X$ is a $\P^1$-bundle. In other words,  we may assume that $S$ is an {\sl elliptic ruled} surface.
\end{rem}

\section{Elliptic ruled surfaces}
\label{Bir1}

Let $\pi: S \to X$ be an elliptic ruled surface, i.e., a $\P^1$-bundle over an elliptic curve $X$. It is known \cite{ShS} that there is a rank $2$ vector bundle over $X$ such that $S$ is biregularly isomorphic to the projectivization $\P(E)$ of $E$ in such a way that $\pi$ becomes the corresponding canonical map
$$\pi: \P(E) \to X.$$
By definition, for each $z \in X(k)$ the fiber $\pi^{-1}(z)$ is the projectivization $\P(E_z)$ of the two-dimensional $k$-vector space $E_z$.

If $\L$ is a line bundle on $X$ then the tensor product $E\otimes \L$ is also  rank $2$ vector bundle over  $X$ and their projectivizations $\P(E)$ and $P(E\otimes \L)$ are canonically isomorphic as $\P^1$-bundles over $X$. Since $E$ is locally trivial, every point $x \in X$ admits an open Zariski neighbourhood $U_x \subset X$ such that the preimage $\pi^{-1}(U_x)$ is biregularly isomorphic to a direct product $U_x \times \P^1$ and the map $\pi^{-1}(U_x) \stackrel{\pi}{\to} U_x$ corresponds (under this isomorphism) to the projection map
$U_x \times \P^1 \to U_x$.

There is a short  exact sequence  \cite[p. 94]{Mar2}
$$\{1\} \to k^{*}\to \Aut(E) \stackrel{\proj}{\to} \Aut_X(S) \to \varDelta \to \{1\} \eqno{(2)}.$$
Here each $\alpha \in k^{*}$ acts as (the corresponding  {\sl homothety} of $E$, i.e., as) multiplication by  $\alpha$ in each fiber $E_x$ (and, of course, induces the identity map on the projectivization), for each $z \in X(k)$ and $u \in \Aut(E)$ the automorphism
 $\proj(u)_z$ of the projective line
$\pi^{-1}(z)=\P(E_x)$  is the {\sl projectivization} of  $u_z\in \Aut(E_z)$. The group $\varDelta$ is a certain finite commutative group that is either trivial or has exponent $2$.

Recall   \cite{Atiyah} that  $E$ has a line subbundle.
There is a natural bijection between regular sections of $\P(E) \to X$ and line subbundles of $E$ \cite[Sect. 3]{Mar1}.
  Every section $s:X \to S=\P(E)$ of $\pi$  gives rise to the line subbundle $E(s)\subset X$. Namely, for each $x \in X(k)$ the fiber of $E(s)$ over $x$ is the one-dimensional subspace of the two-dimensional fiber $E_x$ of $E$ that corresponds to $s(x)\in \P(E_x)$. Conversely, a line subbundle $L \subset E$ gives rise to the section $t_L: X \to S$
where the fiber $L_x$ is the one-dimensional subspace $L_x \in \P(E_x)$. Clearly,
$$E(t_L)=L, \ t_{E(s)}=s.$$
If $L$ is a line subbundle in $E$ then $L\otimes\L$ is a line subbundle in $E\otimes \L$ and
$$t_L=t_{L\otimes\L}.$$
If $s=t_L$ then its image is an (irreducible) effective divisor on $S$, whose self-intersection index
$$(s\cdot s)=\deg(E)-2\deg(L)$$
(see \cite[pp. 17--18]{Mar1}).

A line subbundle $L\subset E$ is called {\sl maximal} if its degree is maximal among the degrees of all line subbundles of $E$. It is known \cite[Sect. 1, p. 6 and Sect. 3]{Mar1} that a maximal line subbundle does exist.   If $L$ is maximal then the corresponding section $s=t_L$ is called {\sl minimal}. A section $s$ is minimal if and only if $(s\cdot s)$ is the smallest among the self-intersection indices of all sections of $\pi$ (see \cite[p. 18--19, Th. 1.16]{Mar1}. If $L$ is maximal then we put
$$N(E)=\deg(E)-2\deg(L).$$
It is known  \cite[p. 11, Prop. 1.9]{Mar1}  that
$$N(E)=N(E\otimes\L).$$
This allows us to introduce the notation
$$N(S):=N(E).$$
It is known that if $N(E)>0$ then $E$ is indecomposable \cite[p. 15, Proof of Cor. 1.12]{Mar1}).

\begin{sect}
\label{maruyama}
We will need the following results of Maruyama \cite{Mar1,Mar2}.

\begin{itemize}
\item[(i)]
If $N(S)=N(E)>0$ then
$\Aut_X(S) \cong \varDelta$ is a {\sl finite} group (see \cite[p. 95, Th. 2(1)]{Mar2}).

\item[(ii)]
If $E$ is {\sl decomposable} and $N(E) \ne 0$ then the image $f(\Aut(S)) \subset \Aut(X)$ in the exact sequence (1) is a {\sl finite} group (see Lemma 7 of \cite[p. 98]{Mar2}.

\item[(iii)]
If $E$ is {\sl indecomposable} and $N(E) \le 0$ then there is a nonnegative integer $r$ such that the group $\Aut_X(S)$ is isomorphic to the direct sum of $r$ copies of the additive group $k$.
(See Theorem 2(2) of \cite[pp. 95--96]{Mar2}.)

\item[(iv)]

Suppose that $E$ is  a direct sum $L_1\oplus L_2$  of  line bundles
$L_1$ and $L_2$. Clearly, the images of $t_{L_1}$ and $t_{L_2}$ in $S=\P(L_1\oplus L_2)$ do not meet each other.

If $L$ is a line subbundle of $E$ then $\deg(L)\le \max (\deg(L_1),\deg(L_2))$ (see \cite[Proof of Lemma 1.1 on p. 6]{Mar1}.)  Assume  that  $\deg(L_1)=\deg(L_2)$.  Then  both $L_1$ and $L_2$ are {\sl maximal} in $E$ and
$$\deg(E)=\deg(L_1)+\deg(L_2)=2\deg(L_1)=2\deg(L_2).$$
 It follows that
$$N(E)=0.$$
 If, in addition, $L_1$ and $L_2$ are {\sl not} isomorphic then the set of all maximal line subbundles in $E$ consists of $L_1$ and $L_2$. (See \cite[Proof of Cor. 1.6(ii)  on p. 9]{Mar1} and Lemma 2(2) of \cite[p. 92]{Mar2}.)
Therefore the set of all {\sl minimal} sections of $S=\P(E)=\P(L_1\oplus L_2) \stackrel{\pi}{\to} X$  consists of $t_{L_1}$ and $t_{L_2}$.
\end{itemize}

\end{sect}

\begin{lem}
\label{Npositive}
Suppose that $N(E)>0$. Then $\Aut(S)$ is a Jordan group.
\end{lem}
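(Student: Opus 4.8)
The plan is to exploit the exact sequence (1) once its kernel becomes finite, and to control the resulting extension of a subgroup of $\Aut(X)$ by that finite kernel. First I would invoke Maruyama's result \ref{maruyama}(i): since $N(S)=N(E)>0$, the kernel $\Aut_X(S)\cong\varDelta$ of $f$ is finite, and by the description of $\varDelta$ in the exact sequence (2) it is commutative of exponent dividing $2$, i.e.\ an elementary abelian $2$-group. The image $f(\Aut(S))$ lies in $\Aut(X)$, which is Jordan because the group of translations $X(k)$ is commutative and of finite index in $\Aut(X)$. Replacing $\Aut(S)$ by the finite-index subgroup $f^{-1}(X(k))$ — legitimate, since a group containing a Jordan subgroup of finite index is Jordan — I may assume $f(\Aut(S))\subseteq X(k)$ is commutative. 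Consequently $[\Aut(S),\Aut(S)]\subseteq\Aut_X(S)\cong\varDelta$.

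The heart of the matter is then a group-theoretic statement of the kind collected in Section \ref{group}, applied to an arbitrary finite subgroup $H\subseteq f^{-1}(X(k))$. The geometry supplies two inputs: the commutator subgroup $[H,H]$ lies in $\Aut_X(S)\cong\varDelta$, so it is elementary abelian of order at most $\#(\varDelta)$; and $f(H)\cong H/(H\cap\varDelta)$ embeds in $X(k)$, whose $2$-torsion is $X(k)[2]\cong(\Z/2\Z)^2$, forcing $f(H)$ to have $2$-rank at most $2$. I would first pass to the centralizer $C_H(H\cap\varDelta)$, of index at most $\#(\GL(\log_2\#(\varDelta),\F_2))$, which makes $H\cap\varDelta$ central and renders $H$ nilpotent of class $\le 2$ with central commutator subgroup. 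The commutator pairing $f(H)\times f(H)\to[H,H]$ is then alternating and takes values in a group of exponent $2$, so it factors through $f(H)/2f(H)$, a group of order at most $4$; the preimage in $C_H(H\cap\varDelta)$ of $2f(H)$ is therefore a normal commutative subgroup of index at most $4$. Combining the two steps yields a normal commutative subgroup of $H$ of index bounded solely in terms of $\#(\varDelta)$.

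The hardest point is exactly this control of the extension. In general an extension of a commutative — indeed of a Jordan — group by a finite normal subgroup need \emph{not} be Jordan: a central extension by $\Z/2\Z$ can produce extraspecial $2$-groups of arbitrarily large order having no commutative subgroup of bounded index. What rules this out here is the bound on the $2$-rank of $X(k)$, which caps the symplectic space underlying the commutator pairing and so forbids extraspecial behaviour; the finiteness and exponent-$2$ property of $\varDelta$ then make the bound uniform. Granting the group-theoretic statement, $f^{-1}(X(k))$ is Jordan with a constant depending only on $\#(\varDelta)$, and hence so is $\Aut(S)$.
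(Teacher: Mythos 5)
Your proposal is correct and takes essentially the paper's approach: Maruyama's result that $\Aut_X(S)\cong\varDelta$ is finite, reduction to the finite-index subgroup $f^{-1}(X(k))$, and then the central-extension/commutator-pairing argument of Section~\ref{group} combined with the bounded rank of finite subgroups of $X(k)$. The paper simply packages this group theory as Lemma~\ref{abext} and Corollary~\ref{abFinite} (using subgroups generated by $d$-th and $r$-th powers where you use the centralizer of $H\cap\varDelta$ and the preimage of $2f(C)$, and without needing the exponent-$2$ structure of $\varDelta$), so your inline argument is a specialized re-proof of that lemma rather than a different route.
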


\begin{proof}
Since $N(E)>0$,  Sect. \ref{maruyama}(i) tells us that
$\Aut_X(S)$ is a finite group. Using the exact sequence (1) (Remark \ref{prelim}(i)), we obtain that $\Aut(X)$ sits in an exact sequence
$$\{1\} \to \varDelta \to \Aut(S) \stackrel{f}{\to}  \Aut(X).$$
Since $\varDelta$ is finite, Corollary \ref{abFinite} (see below) tells us that $\Aut(S)$ is Jordan.
\end{proof}

\begin{lem}
\label{DnotZero}
Suppose that $E$ is decomposable and $N(E) \ne 0$. Then $\Aut(S)$ is a Jordan group.
\end{lem}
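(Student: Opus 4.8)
The plan is to reduce everything to the already-proven Lemma~\ref{Npositive} together with the structural results of Maruyama collected in Sect.~\ref{maruyama}. We are given that $E$ is decomposable and $N(E)\ne 0$. Since $N(E)$ is always $\ge 0$ when $E$ is decomposable (note that if $N(E)>0$ then $E$ is indecomposable, as recorded just before Sect.~\ref{maruyama}, so $N(E)\le 0$ forces decomposability issues in the other direction), the hypothesis $N(E)\ne 0$ must be combined carefully: the decomposable case with $N(E)>0$ cannot occur, so in fact $N(E)>0$ is vacuous here and the genuinely new content is governed by Sect.~\ref{maruyama}(ii). Thus I would first invoke Sect.~\ref{maruyama}(ii), which applies precisely under the present hypotheses ($E$ decomposable, $N(E)\ne 0$) and tells us that the image $f(\Aut(S))\subset\Aut(X)$ in the exact sequence (1) is a \emph{finite} group.

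With that finiteness in hand, the argument parallels the proof of Lemma~\ref{Npositive}. From the exact sequence (1),
$$\{1\} \to \Aut_X(S) \to \Aut(S) \stackrel{f}{\to} \Aut(X),$$
the subgroup $\Aut_X(S)=\ker f$ is Jordan by Remark~\ref{prelim}(i), since it embeds into the linear group $\PGL(2,k(X))$. The quotient $\Aut(S)/\Aut_X(S)$ is isomorphic to $f(\Aut(S))$, which we have just shown is finite. So $\Aut_X(S)$ is a normal subgroup of finite index in $\Aut(S)$. By the Remark following Definition~2.1 (a group containing a Jordan subgroup of finite index is itself Jordan), it follows at once that $\Aut(S)$ is Jordan.

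The one point that deserves care—and which I expect to be the main obstacle—is verifying that Sect.~\ref{maruyama}(ii) applies verbatim, i.e.\ that its hypotheses ($E$ decomposable and $N(E)\ne 0$) are exactly our hypotheses, and that the conclusion about finiteness of $f(\Aut(S))$ is the right input. Unlike Lemma~\ref{Npositive}, where $\Aut_X(S)\cong\varDelta$ was itself finite (so that finiteness came from the kernel side via Corollary~\ref{abFinite}), here $\Aut_X(S)$ need not be finite, and the finiteness instead comes from the image side. I would therefore be explicit that the relevant general group-theoretic fact is the second half of the Remark after Definition~2.1 (finite-index Jordan subgroup implies Jordan), rather than Corollary~\ref{abFinite}. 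Once the correct finiteness statement is cited, the proof is short and formal; the only real risk is misattributing which of $\ker f$ or $f(\Aut(S))$ carries the finiteness, and conflating this situation with that of Lemma~\ref{Npositive}.
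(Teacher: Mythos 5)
Your proof is correct and takes essentially the same route as the paper: invoke Sect.~\ref{maruyama}(ii) to get finiteness of $f(\Aut(S))$, then conclude via Remark~\ref{prelim}(i) (the kernel $\Aut_X(S)$ is Jordan because it embeds in $\PGL(2,k(X))$, and it has finite index once the image is finite). The only blemish is the aside claiming $N(E)\ge 0$ for decomposable $E$ --- the correct implication is the reverse, namely $N(E)>0$ forces $E$ to be indecomposable, so decomposable plus $N(E)\ne 0$ means $N(E)<0$ --- but this digression plays no role in your argument, since Sect.~\ref{maruyama}(ii) applies verbatim under the lemma's hypotheses.
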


\begin{proof}
By  Sect. \ref{maruyama}(ii), the image $f(\Aut(S)) \subset \Aut(X)$ in the exact sequence (1) is finite. Now the result follows from Remark \ref{prelim}(i).
\end{proof}

\begin{lem}
\label{Inegative}
Suppose that $E$ is indecomposable and $N(E)\le 0$. Then $\Aut(S)$ is a Jordan group.
\end{lem}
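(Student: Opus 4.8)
The plan is to deduce the result formally from two facts: the structure of $\Aut_X(S)$ supplied by Maruyama, and the fact that $\Aut(X)$ is a Jordan group. Since $E$ is indecomposable and $N(E)\le 0$, Sect.~\ref{maruyama}(iii) gives a nonnegative integer $r$ such that $\Aut_X(S)$ is isomorphic to a direct sum of $r$ copies of the additive group of $k$. Because $\fchar(k)=0$, that additive group is a $\Q$-vector space and hence torsion-free; therefore $\Aut_X(S)$ is torsion-free and contains no nontrivial finite subgroup. On the other hand, $X(k)$ is commutative, hence Jordan, and (as recalled in the Introduction) has finite index in $\Aut(X)$; since a group containing a Jordan subgroup of finite index is itself Jordan, $\Aut(X)$ is Jordan. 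I would fix once and for all a Jordan constant $J:=J_{\Aut(X)}$, noting that every subgroup of $\Aut(X)$ is then Jordan with the same constant.

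The key observation I would use is that, in the exact sequence $(1)$, the homomorphism $f$ is \emph{injective on finite subgroups}. Indeed, let $F\subset\Aut(S)$ be an arbitrary finite subgroup. Then $F\cap\Aut_X(S)$ is a finite subgroup of the torsion-free group $\Aut_X(S)$, so it is trivial, and consequently $f$ restricts to an isomorphism of $F$ onto its image $f(F)$, which is a finite subgroup of $\Aut(X)$. Since $\Aut(X)$ is Jordan with constant $J$, the group $f(F)$ has a normal commutative subgroup $C$ of index at most $J$; pulling $C$ back through the isomorphism $F\cong f(F)$ produces a normal commutative subgroup of $F$ of index at most $J$. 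As $F$ was an arbitrary finite subgroup, this shows that $\Aut(S)$ is Jordan, with $J_{\Aut(S)}\le J$.

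I expect the only genuinely substantive input to be Maruyama's structure theorem for $\Aut_X(S)$ (Sect.~\ref{maruyama}(iii)), which is quoted rather than reproved; granting it, the argument is purely group-theoretic, and the crux is the torsion-freeness of the kernel $\Aut_X(S)$. It is worth emphasizing that, unlike in Lemma~\ref{DnotZero}, this argument makes no use of the finiteness of the image $f(\Aut(S))$ --- which is exactly why it is needed here, since in the present case (indecomposable $E$ with $N(E)\le 0$) that image may well be infinite and the shortcut of Remark~\ref{prelim}(i) is unavailable. The torsion-freeness of $\Aut_X(S)$ is precisely what allows every finite subgroup to bypass the kernel and inject into the Jordan group $\Aut(X)$.
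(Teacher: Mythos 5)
Your proof is correct and is essentially the paper's own argument: both use Maruyama's description of $\Aut_X(S)$ as a sum of copies of the additive group of $k$, deduce torsion-freeness from $\fchar(k)=0$, conclude that every finite subgroup of $\Aut(S)$ maps isomorphically under $f$ onto a finite subgroup of the Jordan group $\Aut(X)$, and pull back the normal commutative subgroup. (The paper states the same argument more tersely, citing the exact sequence of Remark \ref{prelim}(i) and taking the Jordan property of $\Aut(X)$ as known; your added justifications are fine.)
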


\begin{proof}
By Sect. \ref{maruyama}(iii),
 there is a nonnegative integer $r$ such that the group $\Aut_X(S)$ is isomorphic to the direct sum of $r$ copies of the additive group $k$. In particular, $\Aut_X(S)$ does not contain elements of finite order (except the identity element), because $\fchar(k)=0$. This implies that if $G \subset \Aut(S)$ is a group of finite order then it meets (the subgroup) $\Aut_X(S)$ only at the identity element. Now the exact sequence (2) tells us  that $G$ is isomorphic to its image $f(G) \subset \Aut(X)$. Since $\Aut(X)$ is a Jordan group, we are done.
\end{proof}

Lemmas \ref{Npositive},  \ref{DnotZero} and \ref{Inegative} leave us with the case of decomposable vector bundles $E$ with $N(E)=0$.

\section{Ruled surfaces and theta groups}
\label{theta}
\begin{sect}
Recall that every biregular automorphism $u$ of $\P^1$ that leaves invariant both {\sl zero} $(0:1)$ and {\sl infinity} $(1:0)$ is of the form
$$u: (a:b) \mapsto (a:\mu b) \ \forall (a:b) \in \P^1(k).$$
Here $\mu$ is a nonzero element of $k$ that may be described as follows. Let $\t_{\infty}$ be the one-dimensional tangent space to $\P^1$ at  $(0:1)$. Then the differential (tangent map) of $u$ at  $(0:1)$ acts on  $\t_{\infty}$ as multiplication by $\mu$.
This description gives us immediately the following elementary statement.
\end{sect}

\begin{prop}
\label{elemP}
Let $C$ be an algebraic curve over $K$ with two distinct points
$P_0, P_{\infty}\in C(k)$. Let $w: C \to \P^1$ be a biregular isomorphism
such that $w(P_0)=(0:1), \ w(P_{\infty})=(1:0)$.
Let $\varsigma$ be a biregular automorphism of $C$ that leaves invariant both $P_0$ and $P_{\infty}$.  The action of the differential (tangent map) of $\varsigma$ on the one-dimensional tangent space $\t_{P_{\infty}}(C)$ to $C$ at $P_{\infty}$ is multiplication by a nonzero constant that we denote by
$$\mu=\mu(C,\varsigma)=\mu(C,\varsigma; P_0,P_{\infty}).$$
Then the biregular automorphism $w \varsigma w^{-1}$ of $\P^1$ is defined by the formula
$$(a:b) \mapsto (a:\mu(C,\varsigma) b) \ \forall (a:b) \in \P^1(k).$$
In particular, if $\tilde{\varsigma}$ is a biregular automorphism of $C$ that leaves invariant both $P_0$ and $P_{\infty}$
and $\mu(C,\tilde{\varsigma}; P_0,P_{\infty})=\mu(C,\varsigma; P_0,P_{\infty})$ then $\tilde{\varsigma}=\varsigma$.
\end{prop}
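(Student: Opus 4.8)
The plan is to reduce the statement to the elementary description of automorphisms of $\P^1$ fixing $(0:1)$ and $(1:0)$ recalled immediately before the proposition, and then to match the resulting multiplier with $\mu$ by functoriality of the tangent map. First I would set $v=w\varsigma w^{-1}$, which is a biregular automorphism of $\P^1$. Since $\varsigma$ fixes $P_0$ and $P_\infty$ while $w(P_0)=(0:1)$ and $w(P_\infty)=(1:0)$, the automorphism $v$ fixes both $(0:1)$ and $(1:0)$. By the description recalled above, $v$ is therefore of the form $(a:b)\mapsto(a:\lambda b)$ for a unique nonzero $\lambda\in k$, and $\lambda$ is precisely the scalar by which the differential of $v$ at $(1:0)$ acts on the one-dimensional tangent space $\mathrm{t}_{(1:0)}(\P^1)$.

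The key step is to compute $\lambda$ in terms of $\mu$. Here I would apply the chain rule (functoriality of the tangent map) to $v=w\varsigma w^{-1}$ at the point $(1:0)=w(P_\infty)$, getting $dv_{(1:0)}=dw_{P_\infty}\circ d\varsigma_{P_\infty}\circ (dw_{P_\infty})^{-1}$ as an automorphism of $\mathrm{t}_{(1:0)}(\P^1)$. By the definition of $\mu=\mu(C,\varsigma)$, the map $d\varsigma_{P_\infty}$ is multiplication by $\mu$ on the one-dimensional space $\mathrm{t}_{P_\infty}(C)$. The point to verify is that conjugating a scalar operator by the linear isomorphism $dw_{P_\infty}\colon\mathrm{t}_{P_\infty}(C)\to\mathrm{t}_{(1:0)}(\P^1)$ leaves the scalar unchanged, which is immediate since a scalar commutes with every linear map. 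Hence $dv_{(1:0)}$ is multiplication by $\mu$, so $\lambda=\mu$ and $v\colon(a:b)\mapsto(a:\mu b)$, as claimed.

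Finally, for the uniqueness assertion I would argue that if $\tilde\varsigma$ also fixes $P_0$ and $P_\infty$ and has $\mu(C,\tilde\varsigma;P_0,P_\infty)=\mu$, then applying the formula just obtained to both $\varsigma$ and $\tilde\varsigma$ shows that $w\tilde\varsigma w^{-1}$ and $w\varsigma w^{-1}$ are the same map $(a:b)\mapsto(a:\mu b)$; since $w$ is an isomorphism, conjugation by $w$ is injective on $\Aut(C)$ and therefore $\tilde\varsigma=\varsigma$. I do not expect any serious obstacle: the whole argument is elementary, and the only place demanding care is the identification $\lambda=\mu$ through the tangent map — in particular making sure the differential is read off at $P_\infty\leftrightarrow(1:0)$, since at $P_0$ the corresponding multiplier would be $1/\mu$ instead.
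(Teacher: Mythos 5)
Your proof is correct and is essentially the argument the paper intends when it states that the proposition follows ``immediately'' from the recalled description of automorphisms of $\P^1$ fixing $(0:1)$ and $(1:0)$: conjugate by $w$, apply the chain rule at $P_\infty$, and observe that a scalar operator on a one-dimensional space is unchanged by conjugation. Your closing caution is exactly the right point of care --- the multiplier must be read off at $P_\infty \leftrightarrow (1:0)$ (indeed, the paper's preamble contains a typo there, writing $(0:1)$ for the point carrying the tangent space $\mathrm{t}_{\infty}$ on which the multiplier is $\mu$ rather than $1/\mu$), and your reading is the one consistent with the proposition's statement.
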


\begin{sect}
\label{index}
Suppose that $E$ is {\sl decomposable}, i.e. $E$ is isomorphic to a direct sum $L_1 \oplus L_2$ of line bundles $L_1$ and $L_2$ on $X$. Then
$X=\P(E)$ is biregularly isomorphic (as a $\P^1$-bundle) to $\P(L_1 \oplus L_2)$. Further, we assume that $E=L_1 \oplus L_2$. As above, each $E_i$ gives rise to the section
$$s_i=t_{L_i}: X \to \P(L_1 \oplus L_2)=S.$$
The  images of  $s_1$ and $s_2$ in $S$ do not meet each other (Sect. \ref{maruyama}(iv)). On the other hand, each $\lambda \in k^{*}$ gives rise to the automorphism of $E$
$$i(\lambda): L_1 \oplus L_2 \to L_1 \oplus L_2$$
of $E$ that acts as the identity map on $L_1$ and as multiplication by $\lambda$ on $L_2$.
By abuse of notation, we continue to denote by $i(\lambda)$ the image
 $\proj(i(\lambda))$
 of $i(\lambda)\in \Aut(E)$ in $\Aut_X(S)$  defined in (2)  and view it as the automorphism of $S$.
Clearly, $\lambda$ is uniquely determined by the action of $i(\lambda)$ on any given fiber $C_z:=\pi^{-1}(z)$ of
 $\pi$ with $z\in X(k)$. Actually, one may reconstruct $\lambda$, using Proposition \ref{elemP}. Namely, let us consider two distinct $k$-points $P_{\infty}(z)=s_1(z)$ and $P_0(z)=s_2(z)$ on the curve $C_z$. Notice that $C_z$ is biregularly isomorphic to $\P^1$ and both $P_0(z)$ and $P_{\infty}(z)$ are fixed points of $i(\lambda)$. Now if we denote by $i(\lambda)_z:C_z \to C_z$ the biregular automorphism of $C_z$ induced by $i(\lambda)$ then one may easily check that
$$\lambda=\mu(C_z, i(\lambda)_z; P_0(z),P_{\infty}(z)).$$
\end{sect}

\begin{lem}
\label{key}
Suppose that $L_1$ and $L_2$ are line bundles on $X$, $E=L_1\oplus L_2$ and $S=\P(E)=\P(L_1 \oplus L_2)$. Let us consider the sections
$$s_1=t_{L_1}, \ s_2=t_{L_2}.$$
Suppose that an automorphism $\sigma\in \Aut_X(S)$ respects both images $s_1(X)$ and
$s_2(X)$, i.e.,
$$\sigma(s_1(X))=s_1(X), \  \sigma(s_2(X))=s_2(X).$$
Then there exists precisely one $\beta \in k^{*}$ such that $\sigma=i(\beta)$.
\end{lem}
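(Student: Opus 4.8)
The plan is to reduce the statement to a fibrewise computation and then to prove that the resulting multiplier function on $X$ is constant. First I would use that $\sigma\in\Aut_X(S)$ means $\pi\sigma=\pi$: for each $z\in X(k)$ the point $\sigma(s_1(z))$ lies in the fibre $C_z=\pi^{-1}(z)$, and since $\sigma(s_1(X))=s_1(X)$ it also lies on $s_1(X)$, which forces $\sigma(s_1(z))=s_1(z)$; the same argument gives $\sigma(s_2(z))=s_2(z)$. Hence the induced automorphism $\sigma_z$ of the fibre $C_z\cong\P^1$ fixes the two points $P_{\infty}(z)=s_1(z)$ and $P_0(z)=s_2(z)$, which are distinct because the images of $s_1$ and $s_2$ do not meet (Sect. \ref{maruyama}(iv)). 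By Proposition \ref{elemP}, $\sigma_z$ is then completely determined by its multiplier $\beta(z):=\mu(C_z,\sigma_z;P_0(z),P_{\infty}(z))\in k^{*}$, the factor by which the differential of $\sigma_z$ acts on the one-dimensional tangent space $\t_{P_{\infty}(z)}(C_z)$.

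The hard part will be to show that $z\mapsto\beta(z)$ is independent of $z$. To avoid local coordinate computations, I would package the multipliers geometrically. Consider the vertical tangent bundle $T_{S/X}=\ker(d\pi)$, which is a line bundle on the smooth surface $S$. Since $\pi\sigma=\pi$, the differential $d\sigma$ preserves $T_{S/X}$; restricting along the section $s_1$, whose image $\sigma$ fixes pointwise, yields a line bundle $N_1:=s_1^{*}T_{S/X}$ on $X$ together with an automorphism induced by $d\sigma$ that covers the identity of $X$. Its fibre over $z$ is $\t_{P_{\infty}(z)}(C_z)$, on which it acts by multiplication by $\beta(z)$. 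But an automorphism of a line bundle over $X$ covering $\mathrm{id}_X$ is multiplication by a global invertible regular function, i.e.\ an element of $H^0(X,\Oc_X^{*})=k^{*}$, because $X$ is projective and connected, so its only global regular functions are constants. Therefore $\beta(z)=\beta$ is a single constant in $k^{*}$.

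To conclude I would invoke the reconstruction recalled in Sect. \ref{index}: for every $z$ the automorphism $i(\beta)_z$ of $C_z$ fixes $P_0(z)$ and $P_{\infty}(z)$ and has multiplier $\mu(C_z,i(\beta)_z;P_0(z),P_{\infty}(z))=\beta=\beta(z)$. Since $\sigma_z$ and $i(\beta)_z$ fix the same two points and have the same multiplier, the uniqueness clause of Proposition \ref{elemP} gives $\sigma_z=i(\beta)_z$ for all $z\in X(k)$. As every $k$-point of $S$ lies in some fibre $C_z$, the morphisms $\sigma$ and $i(\beta)$ agree on all $k$-points of $S$ and hence coincide. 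Finally, uniqueness of $\beta$ is immediate: if $i(\beta)=i(\beta')$, comparing the multipliers on any single fibre forces $\beta=\beta'$. The only genuinely nontrivial point is the constancy of $\beta$ established in the second paragraph; everything else is a direct application of Proposition \ref{elemP} and the description in Sect. \ref{index}.
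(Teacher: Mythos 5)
Your proof is correct, and its overall skeleton coincides with the paper's: reduce to fibers (noting that $\sigma$ must fix $s_1(X)$ and $s_2(X)$ pointwise since it preserves each fiber and each image), show the multiplier $z\mapsto\beta(z)$ of Proposition \ref{elemP} is constant, then conclude $\sigma_z=i(\beta)_z$ for all $z$ via the uniqueness clause of Proposition \ref{elemP} and the reconstruction formula of Sect.~\ref{index}. Where you genuinely diverge is in the one nontrivial step, the constancy of the multiplier. The paper argues via local trivializations: on a neighborhood $U_x$ trivializing both $L_1$ and $L_2$, the surface becomes $U_x\times\P^1$ with $s_1,s_2$ going to the constant sections $(1:0)$ and $(0:1)$, whence $h(z)=\mu(C_z,\sigma_z;P_0(z),P_{\infty}(z))$ is seen to be a regular function on $X$ without zeros, hence constant because $X$ is projective. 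You instead package the multipliers intrinsically: the differential $d\sigma$ induces an automorphism of the line bundle $s_1^{*}T_{S/X}$ on $X$ covering $\mathrm{id}_X$, and such an automorphism is an element of $H^0(X,\Oc_X^{*})=k^{*}$. Both arguments ultimately rest on the same fact (global regular functions on projective, irreducible $X$ are constants), but yours is coordinate-free and dispenses with verifying regularity of $h$ through trivializations, at the cost of invoking the relative tangent bundle and the identification of its restriction along $s_1$ with the fiberwise tangent spaces at $P_{\infty}(z)$; the paper's version is more elementary and self-contained, hiding the bookkeeping in a ``clearly.'' Your uniqueness argument for $\beta$ (comparing multipliers on a single fiber) also matches the paper's implicit one.
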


\begin{proof}
Since $\sigma$ leaves invariant every fiber of $\pi$ and both $s_1$ and $s_2$ are sections of $\pi$,  all the points of both  $s_1(X)$ and $s_2(X)$ are fixed points of $\sigma$.  For each $z \in X(k)$ we keep the notation of Sect. \ref{index}.
Clearly, $\sigma$ induces the biregular automorphism $\sigma_z$ of $C_z$ that leaves invariant both $P_0(z)$ and $P_{\infty}(z)$.

Since both $L_1$ and $L_2$ are locally trivial, for each $x\in X(k)$ there is an open neighborhood $U_x \subset X$ such that the restrictions of both $L_1$ and $L_2$ to $U_x$ are trivial and we obtain a {\sl trivialization}, i.e, a biregular isomorphism
 $$\psi: \pi^{-1}(U_x) \cong U_x \times \P^1$$
of $\P^1$-bundles  $\pi^{-1}(U_x)\to U_x$  and the direct product $U_x \times \P^1\to U_x$ in such a way that $s_1(U_x)\subset \pi^{-1}(U_x)$ goes to the {\sl infinite} section $U_x \times (1:0)$ while the zero section
$s_2(U_x)\subset \pi^{-1}(U_x)$ becomes the {\sl zero} section $U_x \times (0:1)$. In particular, $\psi$ induces the biregular isomorphism $\psi_z: C_z \cong \P^1$ that sends  $P_0(z)$ to $(0:1)$ and $P_{\infty}(z)$ to $(1:0)$.

Using Proposition \ref{elemP}, we define
$$h(z)=\mu(C_z,\sigma_z; P_0(z),P_{\infty}(z)) \in k.$$
Clearly, $h(z)$ is a regular function on $X$ without zeros and therefore is a nonzero constant that we denote by $\beta$. So,
$$\beta=\mu(C_z,\sigma_z; P_0(z),P_{\infty}(z)).$$
However, according to the last formula of Section \ref{index},
$$\beta=\mu(C_z, i(\beta)_z; P_0(z),P_{\infty}(z)).$$
It follows from Proposition \ref{elemP} that
$$ i(\beta)_z=\sigma_z \ \forall z \in X(k).$$
In other words the actions of $i(\beta)$ and $\sigma$ coincide on $C_z(k)=\pi^{-1}(z)$ for all $z \in X(k)$.
Since the union of all $\pi^{-1}(z)$ coincides with $\pi^{-1}(X(k))=S(k)$, we conclude that  the actions of $i(\beta)$ and $\sigma$ coincide on $S(k)$, i.e.,  $\sigma=i(\beta)$.

\end{proof}

\begin{cor}
\label{indexTwo}
Suppose that $E=L_1\oplus L_2$ where $L_1$ and $L_2$ are mutually non-isomorphic line bundles of the same degree. 
Let $\Aut^{1}(S)$ be the subgroup of $\Aut(S)$ that consists of all automorphisms that leave invariant   both images $t_{L_1}(X)$ and  $t_{L_2}(X)$. Then $\Aut^{1}(S)$ is a normal subgroup in $\Aut(S)$ and its index is either $1$ or $2$.
\end{cor}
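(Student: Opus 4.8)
The plan is to realize $\Aut^{1}(S)$ as the kernel of a homomorphism from $\Aut(S)$ to a group of order $2$; this gives normality for free, and the index bound follows because the index then equals the order of a subgroup of that group. The group of order $2$ will be the symmetric group on the two-element set of minimal sections, so the whole argument hinges on showing that $\Aut(S)$ acts on that set by permutations.

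First I would recall the intrinsic description of the two distinguished sections. Since $\deg(L_1)=\deg(L_2)$ and $L_1\not\cong L_2$, Sect.~\ref{maruyama}(iv) tells us that $N(E)=0$ and that the only maximal line subbundles of $E$ are $L_1$ and $L_2$; equivalently, $t_{L_1}$ and $t_{L_2}$ are precisely the minimal sections of $\pi\colon S\to X$. By the characterization quoted just before Sect.~\ref{maruyama}, a section is minimal if and only if the self-intersection index of its image attains the smallest value among all sections of $\pi$. Thus the images $t_{L_1}(X)$ and $t_{L_2}(X)$ form exactly the set of sections of $\pi$ whose self-intersection is minimal, a description that makes no reference to the splitting $E=L_1\oplus L_2$.

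The central step is that every $\sigma\in\Aut(S)$ permutes $\{t_{L_1}(X),\,t_{L_2}(X)\}$. By Remark~\ref{prelim}(i), $\sigma$ respects the Albanese fibration, i.e.\ $\pi\sigma=f(\sigma)\pi$ with $f(\sigma)\in\Aut(X)$. Hence, if $s\colon X\to S$ is any section of $\pi$, then $\sigma\circ s\circ f(\sigma)^{-1}$ is again a section of $\pi$, since
$$\pi\circ\bigl(\sigma\circ s\circ f(\sigma)^{-1}\bigr)=f(\sigma)\circ\pi\circ s\circ f(\sigma)^{-1}=\mathrm{id}_X,$$
and its image is exactly $\sigma\bigl(s(X)\bigr)$. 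Because $\sigma$ is a biregular automorphism of $S$, it preserves self-intersection indices, so it carries sections of minimal self-intersection to sections of minimal self-intersection. Therefore $\sigma$ maps the two-element set of images of minimal sections onto itself.

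This permutation action is a group homomorphism $\rho\colon\Aut(S)\to\mathrm{Sym}\bigl(\{t_{L_1}(X),t_{L_2}(X)\}\bigr)\cong\Z/2\Z$, and by the very definition of $\Aut^{1}(S)$ one has $\Aut^{1}(S)=\ker(\rho)$. Consequently $\Aut^{1}(S)$ is a normal subgroup of $\Aut(S)$ and $[\Aut(S):\Aut^{1}(S)]=\#\bigl(\mathrm{im}(\rho)\bigr)$ divides $2$, hence equals $1$ or $2$, as claimed. I expect the only delicate points to be the two inputs that make the permutation action possible: that automorphisms of $S$ genuinely respect the fibration (so that images of sections go to images of sections) and that minimality is detected intrinsically by self-intersection. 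Both are already available — the former from the canonicity of the Albanese map $\pi$ in Remark~\ref{prelim}(i), the latter from Maruyama's results in Sect.~\ref{maruyama}(iv) — so once they are invoked the remainder of the argument is purely formal.
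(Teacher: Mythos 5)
Your proof is correct and follows essentially the same route as the paper's: both arguments show that $\sigma s f(\sigma)^{-1}$ is again a section with image $\sigma(s(X))$ and the same self-intersection index, invoke Sect.~\ref{maruyama}(iv) to identify $\{t_{L_1}(X), t_{L_2}(X)\}$ as exactly the images of the minimal sections, and then realize $\Aut^{1}(S)$ as the kernel of the induced homomorphism to the permutation group on two letters. Your explicit verification that $\pi\circ(\sigma s f(\sigma)^{-1})=\mathrm{id}_X$ via $\pi\sigma=f(\sigma)\pi$ is a nice touch that the paper leaves implicit, but the argument is the same.
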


\begin{proof}
Let $s:X \to S$ be a section of $\pi$. If $\sigma \in \Aut(S)$ then
$$\sigma s f(\sigma)^{-1}: X \to X \to S \to S$$
is also a section of $\pi$, whose image in $S$ coincides with $\sigma s(X)$;
in addition, the self-intersection indices of $s$ and
$\sigma s f(\sigma)^{-1}$ do coincide. This implies that $\Aut(S)$  permutes the set of images of minimal sections of $\pi$.

 By Sect. \ref{maruyama}(iv), the set of all maximal line subbundles in $E$ consists of $L_1$ and $L_2$ and
  the set of all minimal sections consists of $t_{L_1}$ and $t_{L_2}$.
This implies that
$\Aut(S)$ permutes the elements of the two-element set $\{t_{L_1}(X), t_{L_2}(X)\}$. It follows that
  $\Aut^{1}(S)$ is the kernel of a certain group homomorphism from $\Aut(S)$ to the group of permutations in two letters. The rest is clear.
\end{proof}

\begin{sect}
\label{alg}
Let us fix a point $x_0 \in X(k)$. This provides $X$ with the structure of an abelian variety with $x_0$ being the zero of group law on $X$.

Suppose that $E$ is a direct sum $\II_X\oplus L$ of the trivial line bundle $\II_X$  and a line bundle $L$ over $X$. Let $\G(L)$ be the theta group attached to $L$ \cite[Sect. 23]{Mumford}. This means that $\G(L)$ is the algebraic $k$-group that is the group of biregular  automorphisms of the total space of $L$ that lift translations on $X$
and induce linear maps between the fibers of $L$. (Further we will identify $\G(L)$ with the group of its $k$-points.)
It is known that $\G(L)$ sits in a short exact sequence of groups
$$\{1\} \to k^{*} \to \G(L) \stackrel{g}{\to} H(L) \to \{1\}  \eqno{(3)}$$
where each $\alpha \in k^{*}$ acts as  natural multiplicaton by $\alpha$ and
$$H(L)=\{z\in X(k)\mid \TT_z^{*}L \cong L\}$$
is a subgroup of $X(k)$. In addition, every $u \in \G(L)$ is a lift to $L$  of the translation
 $\TT_{g(u)}: X \to X$ by  $g(u)\in X(k)$.  Let us construct the group embedding
$$i_L:\G(L) \to \Aut(S),$$
 whose restriction to $k^*$ coincides with $i: k^* \to \Aut(S)$.  Let $u \in \G(L)$ and $y:=g(u)\in X(k)$. Let us define a biregular automorphism $\tilde{i}_L(u)$ of $E$ that is defined as follows.
For each $x \in X(k)$ the map $\tilde{i}_L(u)$ sends the fiber
$E_x=\{x \times \AA^1\} \times L_x$ to the fiber $E_{x+y}=\{(x+y) \times \AA^1\} \times L_{x+y}$ by the formula
$$((x,a), l) \mapsto ((x+y,a),u(l)) \ \forall a\in k, \ l \in E_x.$$
By definition,
$$\tilde{i}_L(\II_X\oplus \{0\})=\II_X\oplus \{0\}, \ \tilde{i}_L(\{0\}\oplus L)=\{0\}\oplus L . \eqno{(4)}$$
Clearly, $\tilde{i}_L(u):E \to E$ lifts  $\TT_y:X \to X$. Since $\tilde{i}_L(u)$ is linear, one may define its projectivization
$$i_L(u): \P(E) \to \P(E),$$
which is a biregular automorphism of $ \P(E)$ that lifts
$$\TT_y \in X(k)\subset \Aut(X).$$
 It is also clear that for each $\alpha \in k^*$ the automorphism $i_L(\alpha)$ coincides with $i(\alpha)$ and the map
$$\G(L) \to \Aut(S), \ u \mapsto i_L(u)$$
is a group embedding. Further we will identify $\G(L)$ with its image in $\Aut(S)$. Clearly,
$$f(\G(L))=g(\G(L))=H(L) \subset X(k) \subset \Aut(X).$$
It follows from (4) that each $u \in \G(L)$ leaves invariant the images $t_{L_1}(X)$ and  $t_{L_1}(X)$ of $t_{L_1}$ and $t_{L_2}$.

Now assume that $\deg(L)=0$. Then $L$ is algebraically equivalent to the trivial line bundle and therefore
  $H(L)=X(k)$ and the group $\G(L)$ is commutative \cite[Sect. 23]{Mumford}. In particular,  using (3), we get the short exact sequence of {\sl commutative} groups
$$\{1\} \to k^* \to \G(L) \stackrel{g}{\to} X(k) \to \{1\} \eqno{(5)}.$$
This implies that
$$f(\G(L))=g(\G(L))=X(k) \subset f(\Aut(S))\subset \Aut(X)  \eqno{(6)}.$$
See \cite[Lemma 8 on p. 99--100]{Mar2} for an explicit description of  $f(\Aut(S))$.
\end{sect}

The following assertion may be viewed as a rewording of \cite[Th. 3(2) on pp. 106--107]{Mar2}.

\begin{lem}
\label{comm}
Suppose that $E=\II_X\oplus L$ where $L$ is a degree zero line bundle on $X$ that is not isomorphic to $\II_X$.
Then $\G(L)$ is a commutative subgroup of finite index in $\Aut(S)$.
\end{lem}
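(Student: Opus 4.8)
The plan is to separate the two assertions: commutativity of $\G(L)$ is essentially already recorded in Section \ref{alg}, so the real work lies in establishing finite index. For commutativity I would simply invoke the short exact sequence (5). Since $\deg(L)=0$ forces $H(L)=X(k)$, Mumford's theory makes $\G(L)$ abelian, exhibiting it as a central extension of the commutative group $X(k)$ by $k^{*}$; no further argument is needed.

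For the finite-index claim, I would first place $\G(L)$ inside the subgroup $\Aut^{1}(S)$ of Corollary \ref{indexTwo}. Because $L_1=\II_X$ and $L_2=L$ are non-isomorphic line bundles of the same degree $0$, that corollary applies and tells me that $\Aut^{1}(S)$ is normal of index at most $2$ in $\Aut(S)$. Formula (4) shows that every element of $\G(L)$ leaves invariant both $t_{L_1}(X)$ and $t_{L_2}(X)$, so $\G(L)\subset \Aut^{1}(S)$; it therefore suffices to prove that $[\Aut^{1}(S):\G(L)]$ is finite and then to multiply by the factor $\le 2$.

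The heart of the argument is a comparison of kernels under the homomorphism $f$ of the exact sequence (1). On the one hand, $\ker\bigl(f|_{\Aut^{1}(S)}\bigr)=\Aut_X(S)\cap\Aut^{1}(S)$ consists of the fiber-preserving automorphisms fixing both sections, which by Lemma \ref{key} is exactly $i(k^{*})$. On the other hand, the kernel of $f$ restricted to $\G(L)$ is, by (5), again $i(k^{*})$. Hence $f$ induces isomorphisms $\Aut^{1}(S)/i(k^{*})\cong f(\Aut^{1}(S))$ and $\G(L)/i(k^{*})\cong f(\G(L))=X(k)$, and since $i(k^{*})\subset \G(L)\subset\Aut^{1}(S)$ the standard coset correspondence gives $[\Aut^{1}(S):\G(L)]=[f(\Aut^{1}(S)):X(k)]$.

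Finally I would close the argument using the chain $X(k)\subset f(\Aut^{1}(S))\subset \Aut(X)$. Since $X$ is one-dimensional, $X(k)$ has finite index in $\Aut(X)$ (as recorded in the introduction), so $[f(\Aut^{1}(S)):X(k)]$ is finite; therefore $[\Aut^{1}(S):\G(L)]$ is finite, and hence so is $[\Aut(S):\G(L)]$. The one point requiring care — and the step I expect to be the main obstacle — is the precise identification of both kernels with $i(k^{*})$: everything hinges on Lemma \ref{key} guaranteeing that a fiber-preserving automorphism fixing both sections is forced to be a single homothety $i(\beta)$, for this is exactly what lets me transport the index computation down to the base $\Aut(X)$, where the finiteness of $[\Aut(X):X(k)]$ finishes the proof.
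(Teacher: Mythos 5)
Your proof is correct, and it reorganizes the finite-index step in a genuinely different (and slightly more economical) way than the paper, while relying on the same two pillars: Corollary \ref{indexTwo} and Lemma \ref{key}. The paper first passes to $\HH:=f^{-1}(X(k))$, which has finite index in $\Aut(S)$ since $[\Aut(X):X(k)]<\infty$, sets $\HH^1:=\HH\cap\Aut^1(S)$, and then proves the exact equality $\G(L)=\HH^1$: it uses divisibility of $X(k)$ (so that $2\cdot X(k)=X(k)$) to conclude $f(\HH^1)=X(k)=f(\G(L))$, and then, for $\sigma\in\HH^1$, chooses $u\in\G(L)$ with $f(u)=f(\sigma)$ and applies Lemma \ref{key} to $\tau=u^{-1}\sigma$ to get $\sigma=u\,i(\beta)\in\G(L)$. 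You instead stay inside $\Aut^1(S)$, identify both kernels $\ker\bigl(f|_{\Aut^1(S)}\bigr)$ and $\ker\bigl(f|_{\G(L)}\bigr)$ with $i(k^{*})$ (the first by Lemma \ref{key}, the second by the sequence (5)), and then use the coset correspondence for subgroups containing the kernel to get $[\Aut^1(S):\G(L)]=[f(\Aut^1(S)):X(k)]\le[\Aut(X):X(k)]<\infty$. Your route avoids the divisibility argument altogether and yields an explicit bound $[\Aut(S):\G(L)]\le 2\,[\Aut(X):X(k)]$; the paper's route requires divisibility but produces the sharper structural fact that $\G(L)$ is \emph{exactly} the subgroup of automorphisms lying over translations and preserving both minimal sections, not merely a finite-index subgroup of it. Both arguments hinge on the same crucial point you flagged: Lemma \ref{key} forces any fiber-preserving automorphism respecting the two section images to be a homothety $i(\beta)$, and these homotheties already lie in $\G(L)$.
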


\begin{proof}
It follows from (6) that
$$\G(L)\subset f^{-1}(X(k))\subset \Aut(S).$$
Since $X(k)$ is a subgroup of finite index in $\Aut(X)$, the preimage
$\HH:=f^{-1}(X(k))$ is a subgroup of finite index in $\Aut(S)$. So, it suffices to check that
$\G(L)$ is a subgroup of finite index in $\HH$. Let us put
$$\HH^1:= \HH\bigcap \Aut^1(S)=\{\sigma \in \HH\mid \sigma (t_{\II_X}(X))= t_{\II_X}(X),
 \ \sigma(t_L(X))=t_L(X)\}.$$
Clearly, $\HH^1$ is a subgroup of $\HH$ that contains $\G(L)$. It follows from Corollary \ref{indexTwo} that either $\HH^1=\HH$ or $\HH^1$ is a subgroup of index $2$ in $\HH$. However, $\HH^1$ is a subgroup of
 finite index  in $\HH$.
I claim that $\G(L)=\HH^1$. In order to prove that,
first notice that since $[\HH:\HH^1]=1$ or $2$,
$$2 \cdot X(k) \subset f(\HH^1)\subset X(k).$$
Since $k$ is algebraically closed, the group $X(k)$ is divisible; in particular, $2\cdot X(k)=X(k)$ and therefore
$$ f(\HH^1)= X(k)=f(\G(L)).$$
This implies that for each $\sigma \in \HH^1$ there exists $u \in \G(L)$ such that
$$f(\sigma)=f(u) \in X(k).$$
Recall that both $\sigma$ and $u$ leave invariant both $t_{\II_X}(X)$ and $t_L(X)$.
Then the automorphism
$\tau:=u^{-1}\sigma$ lies in $\ker(f)=\Aut_S(X)$ and also leaves invariant both $t_{\II_X}(X)$ and $t_L(X)$.
This implies that
$$\tau (t_{\II_X}(X))=t_{\II_X}(X), \ \tau (t_L(X))= t_L(X).$$
By Lemma \ref{key}, there exists $\beta \in k^{*}$ such that
$$\tau=i(\beta) \in \Aut(S).$$
However,
$$i(\beta)=i_L(\beta) \subset \G(L).$$
This implies that
$$\sigma=u (u^{-1}\sigma)=u \tau=u \ i(\beta) \in \G(L)$$
and we are done.
\end{proof}

Lemma \ref{comm} implies readily the following assertion.

\begin{cor}
\label{Jalg}
Suppose that $E=\II_X\oplus L$ where $L$ is a degree zero line bundle on $X$ that is not isomorphic to $\II_X$.
Then $\Aut(S)$ is a Jordan group.
\end{cor}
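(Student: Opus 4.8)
The plan is to deduce this directly from Lemma \ref{comm} together with the elementary permanence properties of Jordan groups recorded in the Remark following the definition of a Jordan group. Since all of the geometric and group-theoretic substance has already been carried out in Lemma \ref{comm}, the corollary should follow as a purely formal consequence.

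First I would invoke Lemma \ref{comm}: under the hypothesis $E=\II_X\oplus L$ with $\deg(L)=0$ and $L\not\cong\II_X$, it asserts that $\G(L)$ is a \emph{commutative} subgroup of \emph{finite index} in $\Aut(S)$. Next I would appeal to the two observations from the Remark. On the one hand, every commutative group is Jordan (one may take the whole group as the required normal commutative subgroup, so that $J_B=1$); applying this to $\G(L)$ shows that $\G(L)$ is Jordan. On the other hand, if a Jordan group $G_1$ is a subgroup of finite index in a group $G$, then $G$ is itself Jordan; applying this with $G_1:=\G(L)$ and $G:=\Aut(S)$ then yields that $\Aut(S)$ is Jordan, completing the argument.

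I do not expect any genuine obstacle at this stage: the entire difficulty resides in establishing Lemma \ref{comm}, namely the commutativity of the theta group $\G(L)$ when $\deg(L)=0$ and the finite-index computation (which passes through Corollary \ref{indexTwo} and the divisibility of $X(k)$). Once Lemma \ref{comm} is in hand, the corollary is immediate from the closure of the class of Jordan groups under passage to finite-index overgroups, so the proof is essentially a one-line deduction.
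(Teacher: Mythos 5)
Your proof is correct and is exactly the paper's intended argument: the paper states that Lemma \ref{comm} "implies readily" the corollary, and the deduction it has in mind is precisely yours, namely that $\G(L)$ is commutative (hence Jordan) and of finite index in $\Aut(S)$, so $\Aut(S)$ is Jordan by the permanence properties in the Remark. Nothing is missing.
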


\section{Proof of main result}
\label{final}
\begin{proof}[Proof of Theorem \ref{elliptic}]
As was explained in Remark \ref{prelimiv}, it suffices to prove that
 $\Aut(S)$ is Jordan when $S=\P(E)$ is an elliptic ruled surface over $X$.

If $E$ is indecomposable then the result follows from  Lemmas \ref{Npositive} and \ref{Inegative}.

Suppose that $E$ is decomposable. If $N(E) \ne 0$ then the result follows from Lemma \ref{DnotZero}. Now assume that $N(E)=0$. This implies that $E$ is isomorphic to a direct sum $L_1\oplus L_2$ of line bundles $L_1$ and $L_2$ over $X$. By tensoring $E$ by $\L=L_1^{-1}$, we may and will assume that $E=\II_X \oplus L$. We have $\det(E)=\deg(L)$.
 In addition,  the degree $d$  of a maximal line subbundle of $E$ is {\sl nonnegative}, because $\deg(\II_X)=0$.
I claim that $\deg(L)= 0$. Indeed, if $\deg(L)>0$ then  $d \ge \deg(L)>0$ and therefore
$$N(E)=\deg(E)-2d=\deg(L)-2d\le d-2d=-d<0,$$
i.e., $N(E)<0$, which is not the case. If $\deg(L)<0$ then $\deg(E)=\deg(L)<0$ and
$$N(E)=\deg(E)-2d \le \deg(E)<0,$$
i.e., $N(E)<0$, which is not the case. So, $\deg(L)=0$. If $L$ is isomorphic to $\II_X$ then
$$S=\P(\II_X \oplus L)=\P(\II_X \oplus \II_X)=\P(\II_X^2)=\P(X \times \AA^2)=X\times \P^1.$$
In this case it is known \cite{Mar2} that
$$\Aut(X\times \P^1)=\Aut(X) \times \Aut(\P^1)=\Aut(X) \times \PGL(2,k):$$
it is a product of two Jordan groups and therefore is also Jordan \cite{Popov}.
If $L$ is {\sl not} isomorphic to $ \II_X$, Corollary \ref{Jalg} implies that $\Aut(S)$ is Jordan.

\end{proof}

\section{Group theory}
\label{group}
\begin{sect}
\label{pairing}
Let
$$\{1\} \to \A \to \B \stackrel{j}{\to} \C \to \{1\}$$
be a short exact sequence of groups such that $\C$ is commutative and $\A$ is a {\sl central} subgroup of $\B$.
We denote by $1_{\A}$ the identity element of $\A$. We write down the group law on $\C$ additively and on $\A$ and $\B$ multiplicatively.

It is well known (see, e.g. \cite{ZarhinMZ74}) that there is an alternating bimultiplicative pairing
$$e: \C \times \C \to \A, \ (j(b_1), j(b_2)) \mapsto b_1 b_2 b_1^{-1} b_2^{-1}.$$
Clearly, $b_1$ and $b_2$ commute if and only if $e(j(b_1),j(b_2))=1_{\A}$. In particular,
the pairing $e$ is trivial, i.e.,
$$e(x,y)=1_{\A} \ \forall x,y \in \C$$
if and only if $\B$ is commutative.

Suppose that $\A$ has finite exponent. This means that there is a positive integer $r$ such that
$$a^r=1_{\A} \ \forall a \in \A.$$
This implies that
$$1_{\A}=e(x,y)^r=e(x^r,y)=e(x,y^r) \ \forall x,y \in \C.$$
It follows that for each $b_1,b_2 \in \B$
$$e(j(b_1^r), j(b_2^r))=e(j(b_1),j(b_2))^{r^2}=1_{\A},$$
i.e., $b_1^r$ and $b_2^r$ commute.
Let $\B_r$ be the subgroup of $\B$ generated by all $r$th powers $b^r$ ($b\in \B$). Clearly, $\B_r$ is a {\sl normal commutative} subgroup in $\B$ and
$$j(\B_r)=r\C=\{rc \mid c\in \C\}.$$

 \end{sect}
\begin{lem}
\label{abext}
Suppose that $Y$ is an abelian variety over an algebraically closed field $\kappa$ of arbitrary characteristic and $\Delta$ is a finite group. Suppose that a group $G$ sits in an exact sequence
$$\{1\} \to \Delta \to G \stackrel{f}{\to} Y(\kappa).$$
Then $G$ is Jordan group.
\end{lem}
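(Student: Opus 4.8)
The plan is to reduce to the central-extension machinery of Section \ref{pairing}, after first replacing $B_1$ by the centralizer of its intersection with $\Delta$, and to feed in a uniform bound on the number of generators of finite subgroups of $Y(\kappa)$.

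The one substantive ingredient, which I would establish first, is this uniform bound: writing $g:=\dim(Y)$, every finite subgroup of $Y(\kappa)$ is generated by at most $2g$ elements. This rests on the structure of torsion points of an abelian variety. For every prime $\ell\ne\fchar(\kappa)$ one has $Y[\ell^m](\kappa)\cong(\Z/\ell^m\Z)^{2g}$, so the $\ell$-primary torsion $Y[\ell^\infty](\kappa)$ has $\ell$-rank $2g$; and if $p:=\fchar(\kappa)>0$, then $Y[p^\infty](\kappa)$ has $p$-rank at most $g$. Since the minimal number of generators of a finite abelian group equals the maximum over primes $\ell$ of the minimal number of generators of its $\ell$-primary part, every finite subgroup of $Y(\kappa)$ is generated by at most $2g$ elements.

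Now let $B_1\subseteq G$ be an arbitrary finite subgroup and set $N:=B_1\cap\Delta$. Then $N$ is normal in $B_1$ with $\#N\le\#\Delta=:d$, and $f$ induces an isomorphism of $B_1/N$ onto the finite subgroup $f(B_1)\subset Y(\kappa)$, which by the previous paragraph is abelian and generated by at most $2g$ elements. Since $N$ need not be abelian, I would pass to its centralizer $C:=C_{B_1}(N)$. As $N$ is normal, $C$ is normal in $B_1$, and since $C$ is the kernel of the conjugation homomorphism $B_1\to\Aut(N)$, its index satisfies $[B_1:C]\le\#\Aut(N)\le(\#N)!\le d!$. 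Inside $C$ the subgroup $Z(N)=N\cap C$ is central, finite abelian of order at most $d$, hence of exponent $r$ with $r\le d$; and $\bar C:=C/Z(N)\cong CN/N$ is a subgroup of the abelian group $B_1/N$, so it too is abelian and generated by at most $2g$ elements.

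It then remains to apply Section \ref{pairing} to the central extension $\{1\}\to Z(N)\to C\to\bar C\to\{1\}$, whose kernel $Z(N)$ is central of exponent $r$ and whose quotient $\bar C$ is commutative. By that construction the subgroup $C_r\subseteq C$ generated by all $r$-th powers is normal and commutative in $C$, with image $r\bar C$ in $\bar C$. A direct index count then yields
$$[C:C_r]\le\#(\bar C/r\bar C)\cdot\#Z(N)\le r^{2g}\cdot d\le d^{2g+1},$$
using that $\bar C$ is generated by at most $2g$ elements and that $r\le d$. Finally, $C_r$ is characteristic in $C$ (being generated by $r$-th powers) and $C$ is normal in $B_1$, so $C_r$ is a normal commutative subgroup of $B_1$, of index $[B_1:C_r]=[B_1:C]\,[C:C_r]\le d!\,d^{2g+1}$. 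This bound depends only on $\#\Delta$ and $\dim(Y)$, not on $B_1$, so taking $J_G:=d!\,d^{2g+1}$ shows that $G$ is Jordan. The main obstacle is the uniform generation bound of the second paragraph; the remaining steps are the centralizer reduction, needed precisely because $N$ may fail to be abelian, which is why one works with $Z(N)=N\cap C$ rather than with $N$ itself, together with the pairing of Section \ref{pairing}.
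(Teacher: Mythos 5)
Your proof is correct, and it rests on the same two pillars as the paper's: the bound that finite subgroups of $Y(\kappa)$ need at most $2g$ generators, and the commutator-pairing machinery of Section~\ref{pairing} applied to a central extension with commutative quotient, extracting the normal commutative subgroup generated by $r$-th powers. Where you genuinely diverge is the reduction to centrality. The paper sets $d=\#\Aut(\Delta)$ and passes to the subgroup $B_0$ generated by all $d$-th powers of elements of the finite group $B$: since conjugation gives a homomorphism $B\to\Aut(\Delta)$, every $d$-th power centralizes \emph{all} of $\Delta$, so $\Delta_0=\Delta\cap B_0$ is central in $B_0$; the cost is the index factor $[f(B):d\,f(B)]\le d^{2g}$, i.e.\ the paper invokes the abelian-variety divisibility bound a second time at this reduction step. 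You instead pass to the centralizer $C=C_{B_1}(N)$ of $N=B_1\cap\Delta$, whose index is bounded purely group-theoretically by $\#\Aut(N)\le(\#\Delta)!$, at the price that only $Z(N)=N\cap C$, rather than all of $N$, becomes central --- which suffices, as you observe, because $C/Z(N)\cong CN/N$ still embeds in the abelian group $B_1/N$. Both routes yield uniform constants, just different ones: the paper gets $(rd)^{2g}\#\Delta$ with $r$ the exponent of $\Delta$ and $d=\#\Aut(\Delta)$, while you get $d!\,d^{2g+1}$ with $d=\#\Delta$. Your variant is a bit more self-contained (the generation bound for subgroups of $Y(\kappa)$ is used only once, and non-commutativity of $N$ is handled head-on via its center), whereas the paper's power-subgroup trick keeps the whole of $\Delta\cap B_0$ central and avoids the factorial index loss coming from $\#\Aut(N)$.
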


\begin{proof}
We write down the group law on (commutative) $Y(k)$ additively and on $\Delta$ and $G$ multiplicatively.
We may assume that $g:=\dim(Y)>0$, since otherwise $G=\Delta$ is finite and we are done.  Recall \cite{Mumford} that if $m$ is a positive integer then the kernel $Y_m$ of multiplication by $m$ in $Y(\kappa)$ is a direct sum of, at most, $2g$ cyclic groups. This implies
that if $H$ is a finite subgroup of $Y(\kappa)$ then $H$ is isomorphic to a direct sum of, at most, $2g$ cyclic groups. It follows  that for each positive integer $n$ the index $[H:nH]$ divides $n^{2g}$; in particular, it does not exceed the universal constant $n^{2g}$, which does not depend on $H$.
Let $r$ be the exponent of $\Delta$ and $d$ the order of its automorphism group $\Aut(\Delta)$.

Let $B$ be a finite subgroup in $G$. Let $B_0$ be the normal subgroup of $B$ generated by $d$th powers of elements of $B$. Clearly, $B_0$ is normal in $B$. On the other hand, all elements of $B_0$ commute with $\Delta$. This implies that the intersection
$$\Delta_0:=\Delta\bigcap B_0$$
lies in the center of $\Delta$. In particular, $\Delta_0$ is commutative. Clearly, $\Delta_0$ lies in the center of $B_0$ and is a normal subgroup of $B_0$. The group $B_0$ sits in the short exact sequence
$$\{1\} \to \Delta_0 \subset B_0 \stackrel{f_0}{\to}  d \ f(B) \to \{0\}$$
with central subgroup $\Delta_0$ and commutative
$d \ f(B)$.
(Here
$$f_0: B_0 \to  Y(k)$$
is the restriction of $f$ to $B_0$.)
This implies that the index
$$[B:B_0] =\frac{\#(\Delta)}{\#(\Delta_0)} \cdot \frac{\#(f(B))}{\#(d\ f(B))}\le \#(\Delta)\cdot d^{2g}.$$
Let us put
$$\A=\Delta_0, \B=B_0, \C=f(B_0)=d \ f(B), \ j=f_0: B_0 \twoheadrightarrow d\ f(B)=\C.$$
Then we get the short exact sequence as in Sect. \ref{pairing}:
$$\{1\} \to \A \to \B \stackrel{j}{\to} \C \to \{1\}.$$
Since $r$ is the exponent of $\Delta$ and $\A=\Delta_0$ is a subgroup of $\Delta$,
$$a^r=1_{\A} \ \forall a \in \A=\Delta_0.$$
Let $\B_r$ be the subgroup of $\B=B_0$ that is generated by $r$th powers of all elements of $B_0$. Clearly, $\B_r$ is normal in $B$. Thanks to the arguments of Sect. \ref{pairing}, $\B_r$ is {\sl commutative} and
$$j(\B_r)=f_0(\B_r)=f(\B_r)= r \ f(B_0)=rd \ f(B).$$
It follows easily that $[B:\B_r]$ divides $(rd)^{2g} \#(\Delta)$.
\end{proof}

\begin{cor}
\label{abFinite}
Let $X$ be an elliptic curve over an algebraically closed field $\kappa$ of arbitrary characteristic and $\Delta$ is a finite group. Suppose that a group $G$ sits in an exact seqwuence
$$\{1\} \to \Delta \to G \stackrel{f}{\to} \Aut(X).$$
Then $G$ is Jordan.
\end{cor}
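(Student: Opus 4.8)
The plan is to reduce Corollary \ref{abFinite} to Lemma \ref{abext} by passing to a subgroup of finite index. First I would fix a $\kappa$-point $x_0 \in X(\kappa)$ and use it to endow the elliptic curve $X$ with the structure of a one-dimensional abelian variety, so that translations give the embedding $X(\kappa) \hookrightarrow \Aut(X)$ described in the introduction. As recalled there, since $\dim(X)=1$ the subgroup $X(\kappa)$ has \emph{finite} index in $\Aut(X)$; write $N := [\Aut(X):X(\kappa)] < \infty$. This is the one external input about elliptic curves that the whole reduction rests on.

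Next I would pull this finite-index subgroup back through $f$. Set
$$G_1 := f^{-1}(X(\kappa)) \subseteq G.$$
Since the sequence is exact, $\Delta = \ker(f) \subseteq G_1$, and $f$ induces an injection $\bar f : G/\Delta \hookrightarrow \Aut(X)$ that carries $G_1/\Delta$ onto $\bar f(G/\Delta) \cap X(\kappa)$. Hence
$$[G:G_1] = [\,\bar f(G/\Delta) : \bar f(G/\Delta)\cap X(\kappa)\,],$$
which divides $N$ and is in particular finite. I would note explicitly that $f$ is not assumed surjective, but this causes no difficulty: one only uses that $X(\kappa)$ has finite index in $\Aut(X)$, and the index of $G_1$ in $G$ is controlled by the index of $f(G)\cap X(\kappa)$ in $f(G)$.

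By construction $G_1$ fits into the exact sequence
$$\{1\} \to \Delta \to G_1 \stackrel{f}{\to} X(\kappa),$$
which is precisely the hypothesis of Lemma \ref{abext} applied to the one-dimensional abelian variety $Y = X$ over the algebraically closed field $\kappa$ (of arbitrary characteristic). Therefore $G_1$ is a Jordan group. Finally, since $G_1$ is a Jordan group of finite index in $G$, the Remark following the definition of a Jordan group yields that $G$ itself is Jordan, completing the proof.

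The argument is essentially formal, so I do not anticipate any serious obstacle: the two substantive ingredients, namely the finiteness of $[\Aut(X):X(\kappa)]$ for an elliptic curve and Lemma \ref{abext}, are already available. The only point requiring mild care is bounding $[G:G_1]$ correctly given that $f$ need not be surjective, which is handled by the coset computation above.
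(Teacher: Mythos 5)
Your proposal is correct and follows exactly the paper's own argument: fix a base point to view $X(\kappa)$ as a finite-index subgroup of $\Aut(X)$, pull it back to $G_1 = f^{-1}(X(\kappa))$, apply Lemma \ref{abext} to $G_1$, and conclude by finite-index inheritance of the Jordan property. Your explicit coset computation bounding $[G:G_1]$ and your remark that surjectivity of $f$ is not needed are just more detailed versions of steps the paper leaves implicit.
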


\begin{proof}
Fix a point $x_0 \in X(\kappa)$. Then $X$ becomes the one-dimensional abelian variety with $x_0$ be the zero of group law.
Then by (0) we may view
$X(\kappa)$ as a subgroup in $\Aut(X)$ of finite index.
This implies that $G_1:=f^{-1}(X(\kappa))$ is a subgroup of finite index in $G$. Therefore, it suffices to check that $G_1$ is Jordan. However, $G_1$ sits in an exact sequence
$$\{1\} \to \Delta \to G_1 \stackrel{f}{\to} X(\kappa).$$
Now Lemma \ref{abext} implies that $G_1$ is Jordan.
\end{proof}

\begin{rem}
One may deduce Lemma \ref{abext} from Lemma 2.8 in \cite{ProSh}.
\end{rem}

\end{document}